

\let\chooseClass1   

	\ifx\chooseClass0
\documentclass[14pt]{extarticle}
\emergencystretch 7 pt
\setlength{\oddsidemargin}{-17 mm} 
\setlength{\textwidth}{192 mm}     
\setlength{\textheight}{246 mm}    
\setlength{\topmargin}{-31 mm}     
	\fi

	\ifx\chooseClass1
\documentclass[12pt]{article}
\textheight = 8.5in
\textwidth 6.3in
\setlength{\oddsidemargin}{0mm}
\setlength{\topmargin}{-15 mm}
	\fi

    \ifx\chooseClass2
\documentclass[final,1p,times]{elsarticle}
	\else
\makeatletter
\def\@seccntformat#1{\csname the#1\endcsname.\quad}
\renewcommand\section{\@startsection {section}{1}{\z@}%
                                   {-3.5ex \@plus -1ex \@minus -.2ex}%
                                   {2.3ex \@plus.2ex}%
                                   {\normalfont\large\bfseries}}
\renewcommand\subsection{\@startsection{subsection}{2}{\z@}%
                        {3.25ex plus 1ex minus .2ex}{-.5em}%
                        {\normalfont\normalsize\bfseries}}
\renewcommand\subsubsection{\@startsection{subsubsection}{3}{\z@}%
                        {3.25ex plus 1ex minus .2ex}{-.5em}%
                        {\normalfont\normalsize\it}}
\@addtoreset{equation}{section}
\makeatother
	\fi

\usepackage{amsthm}
\newtheoremstyle{boldhead}
{\topsep}
{\topsep}
{\slshape}
{}
{\bfseries}
{.}
{ }
{\thmname{#1}\thmnumber{ #2}\thmnote{ (#3)}}

\newtheoremstyle{boldremark}
{\topsep}
{\topsep}
{\upshape}
{}
{\bfseries}
{.}
{ }
{\thmname{#1}\thmnumber{ #2}\thmnote{ (#3)}}

\swapnumbers

\theoremstyle{boldhead}
\newtheorem{theorem}[subsection]{Theorem}
\newtheorem{claim}[subsection]{Claim}
\newtheorem{lemma}[subsection]{Lemma}
\newtheorem{proposition}[subsection]{Proposition}

\theoremstyle{boldremark}
\newtheorem*{acknowledgement}{Acknowledgement}
\newtheorem{conjecture}[subsection]{Conjecture}
\newtheorem{definition}[subsection]{Definition}
\newtheorem{example}[subsection]{Example}
\newtheorem{remark}[subsection]{Remark}

\usepackage{    amsmath,
                amsfonts,
                amssymb,
}

\numberwithin{equation}{section}

\usepackage{t-angles} 
\unitlens1em

\IfFileExists{url.sty}{\usepackage{url}}{}
\providecommand{\url}[1]{{\tt #1}}

\usepackage{ifpdf}
\ifpdf
    \IfFileExists{hyperref.sty}{\usepackage[pdftex]{hyperref}}{}
\else
    \IfFileExists{hyperref.sty}{\usepackage[hypertex]{hyperref}}{}
\fi

\IfFileExists{euscript.sty}{\usepackage[mathcal]{euscript}}{}
\IfFileExists{mathrsfs.sty}{\usepackage{mathrsfs}}{\let\mathscr\mathfrak}

\IfFileExists{dsfont.sty}%
{\usepackage[sans]{dsfont}\newcommand\1{{\mathds 1}}}%
{\newcommand\1{{1\mkern-5mu {\mathrm I}}}}

\message{You may get an up to date version of Paul Taylor's}
\message{diagrams.sty from his home web page.}
\message{Without diagrams you can not process this file!}
\usepackage{diagrams}
\diagramstyle[height=2em,balance,righteqno,PostScript=dvips,nohug]

\newarrowtail{sscurlyvee}{\raise.18ex\hbox{$\sss\succ$}}%
{\mkern-.5mu\raise.18ex\hbox{$\sss\prec$}\mkern.4mu}%
{\sss\curlyvee}{\sss\curlywedge}

\newarrow{Cof}{sscurlyvee}---{->}
\newarrow{Fib}----{->>}
\newarrow{Epi}----{triangle}
\newarrow{Mono}{boldhook}---{->}
\newarrow{TTo}----{->}

\newcommand\KK{{\mathbb K}}
\newcommand\NN{{\mathbb N}}
\newcommand\ZZ{{\mathbb Z}}

\newcommand{\ca}{{\mathcal A}}
\newcommand{\cb}{{\mathcal B}}
\newcommand{\cc}{{\mathcal C}}
\newcommand{\cd}{{\mathcal D}}
\newcommand{\cf}{{\mathcal F}}
\newcommand{\cg}{{\mathcal G}}
\newcommand{\CG}{{\mathscr G}}
\newcommand{\cl}{{\mathcal L}}
\newcommand{\cn}{{\mathcal N}}
\newcommand{\co}{{\mathcal O}}
\newcommand{\cp}{{\mathcal P}}
\newcommand{\cP}{{\mathcal P}}
\newcommand{\cq}{{\mathcal Q}}
\newcommand{\cR}{{\mathcal R}}
\newcommand{\cs}{{\mathcal S}}
\newcommand{\cu}{{\mathcal U}}
\newcommand{\cv}{{\mathcal V}}
\newcommand{\cw}{{\mathcal W}}
\newcommand{\cx}{{\mathcal X}}
\newcommand{\sff}{{\mathsf f}}

\newcommand{\ainf}[1]{$A_\infty$\nobreakdash-\hspace{0pt}}
\newcommand{\colim}{\qopname\relax m{colim}}

\newcommand{\imodi}{\textup{-mod-}}
\newcommand{\modul}{\textup{-mod}}

\newcommand{\n}[1]{\nobreakdash-\hspace{0pt}}
\newcommand{\nOp}[1]{{}_{#1}\mkern-4.5mu\Op}
\newcommand{\sS}[2]{\vphantom{#2}#1 #2}
\newcommand{\Sim}{{\mkern-4.5mu\sim\mkern1.5mu}}
\newcommand{\su}{{\mathsf{su}}}
\newcommand{\tdt}{\otimes\dots\otimes}

\let\emptyset\varnothing
\let\eps\varepsilon
\let\ge\geqslant
\let\kk\Bbbk
\let\le\leqslant

\let\sss\scriptstyle
\let\tens\otimes
\let\ttt\textstyle
\let\und\underline

\DeclareMathOperator\as{\textit{as}}
\DeclareMathOperator\As{\textit{As}}

\DeclareMathOperator\ass{\textit{as1}}
\DeclareMathOperator\Ass{\textit{As1}}

\DeclareMathOperator\Coker{Coker}
\DeclareMathOperator\Col{Col}

\DeclareMathOperator\Cone{Cone}
\DeclareMathOperator\dg{\mathbf{dg}}
\DeclareMathOperator\udg{\underline{\dg}}

\DeclareMathOperator\END{{\mathcal E}\textit{nd}}

\DeclareMathOperator\gr{\mathbf{gr}}

\DeclareMathOperator\HOM{{\mathcal H}\textit{om}}
\DeclareMathOperator\id{id}
\DeclareMathOperator\Id{Id}
\DeclareMathOperator\im{Im}
\DeclareMathOperator\inj{in}
\DeclareMathOperator\Inp{Inp}
\DeclareMathOperator\IV{v}

\DeclareMathOperator\Ob{Ob}

\DeclareMathOperator\Op{Op}

\DeclareMathOperator\Set{\mathcal Set}

\newcommand{\propref}[1]{Proposition~\ref{#1}}
\newcommand{\remref}[1]{Remark~\ref{#1}}

\newcommand{\thmref}[1]{Theorem~\ref{#1}}

\begin{document}

    \ifx\chooseClass2
\begin{frontmatter}
\title{Homotopy unital $A_\infty$-algebras}
\author{Volodymyr Lyubashenko\fnref{VLthanks}}
\ead{lub@imath.kiev.ua}
\ead[url]{http://www.math.ksu.edu/$\sim$lub}
\address{Institute of Mathematics,
Nat. Acad. Sci. Ukraine,
3 Tereshchenkivska st.,
Kyiv-4, 01601 MSP,
Ukraine}
\fntext[VLthanks]{The research was supported in part by research program 
0107U002333 of National Academy of Sciences of Ukraine}
\begin{abstract}
It is well known that the differential graded operad of $A_\infty$\n-algebras is a cofibrant replacement (a $\dg$\n-resolution) of the operad of associative differential graded algebras without units. 
In this article we find a cofibrant replacement of the operad of associative differential graded algebras with units. 
Algebras over it are called homotopy unital $A_\infty$\n-algebras. 
We prove that the operad bimodule of $A_\infty$\n-morphisms is a cofibrant replacement of the operad bimodule of morphisms of $\dg$\n-algebras without units. 
Similarly we show that the operad bimodule of homotopy unital 
$A_\infty$\n-morphisms is a cofibrant replacement of the operad bimodule of morphisms of $\dg$\n-algebras with units.
\end{abstract}

\begin{keyword}
$A_\infty$-algebras \sep $A_\infty$-morphisms \sep operads

\MSC 18D50
\end{keyword}

\end{frontmatter}
	\else
\title{Homotopy unital $A_\infty$-algebras}
\author{Volodymyr Lyubashenko}
\date{lub@imath.kiev.ua
\thanks{Institute of Mathematics,
National Academy of Sciences of Ukraine,
3 Tereshchenkivska st.,
Kyiv-4, 01601 MSP,
Ukraine}
\thanks{The research was supported in part by research program 0107U002333 of 
National Academy of Sciences of Ukraine}
}
\maketitle
\begin{abstract}
It is well known that the differential graded operad of $A_\infty$\n-algebras is a cofibrant replacement (a $\dg$\n-resolution) of the operad of associative differential graded algebras without units. 
In this article we find a cofibrant replacement of the operad of associative differential graded algebras with units. 
Algebras over it are called homotopy unital $A_\infty$\n-algebras. 
We prove that the operad bimodule of $A_\infty$\n-morphisms is a cofibrant replacement of the operad bimodule of morphisms of $\dg$\n-algebras without units. 
Similarly we show that the operad bimodule of homotopy unital 
$A_\infty$\n-morphisms is a cofibrant replacement of the operad bimodule of morphisms of $\dg$\n-algebras with units.
\end{abstract}
	\fi

\allowdisplaybreaks[1]

\begin{flushright}
\it To Corrado De Concini on his 60-th birthday
\end{flushright}

The notion of an \ainf-algebra is known from the pioneer work of Stasheff \cite{Stasheff:HomAssoc}. 
Markl~\cite{Markl:ModOp} gave an explanation why \ainf-algebras are defined in their peculiar way. 
He showed that the $\dg$\n-operad $A_\infty$ of \ainf-algebras is a resolution of the $\dg$\n-operad $\As$ of associative non-unital $\dg$\n-algebras. 
The $\dg$\n-operad $A_\infty$ is freely generated by $n$\n-ary operations $m_n$ of degree $2-n$ for $n\ge2$. 
We reformulate these observations stating that the operad projection \(A_\infty\to\As\) is a homotopy isomorphism and a cofibrant replacement for certain model structure of the category of $\dg$\n-operads introduced by Hinich~\cite{Hinich:q-alg/9702015}. 
In this model structure weak equivalences are quasi-isomorphisms, fibrations are epimorphisms of collections, and freely generated operads are examples of cofibrant objects.

Units for \ainf-algebras are as important as they are for associative algebras. 
However, they are much harder to define except the notion of a strict unit, which occurs naturally only in $\dg$\n-categories, but is technically useful. 
Let us mention that there are at least three different definitions dealing with non-strict units: homotopy unital \ainf-algebras due to Fukaya~\cite{Fukaya:FloerMirror-II} (see also \cite{FukayaOhOhtaOno:Anomaly}), unital \ainf-algebras due to the author~\cite{Lyu-AinfCat}, and weakly unital \ainf-algebras due to Kontsevich and Soibelman~\cite{math.RA/0606241}. 
All these definitions apply also to \ainf-categories. 
Among those \ainf-algebras unital ones require minimal data besides the 
\ainf-structure: a non-strict unit and two homotopies. 
More involved definition of a homotopy unital \ainf-algebra assumes a large coherent system of homotopies. 
It applies to a wider class of filtered \ainf-algebras. 
Fortunately, due to results of Manzyuk and the author \cite{LyuMan-unitalAinf} all three definitions are equivalent in the following sense: an arbitrary \ainf-algebra is unital iff it admits a homotopy unital structure iff it admits a weakly unital structure.

In this article we find a cofibrant replacement \(A_\infty^{hu}\) of the $\dg$\n-operad $\Ass$ of associative differential graded algebras with units. 
As a graded operad it is freely generated by a nullary homotopy unit $i$ and operations \(m_{n_1;n_2;\dots;n_k}\), $k\ge1$, $n_1$, \dots, $n_k\in\ZZ_{\ge0}$, of arity \(n=\sum_{q=1}^kn_q\), \(n+k\ge3\), and of degree $4-n-2k$. 
The projection morphism \(A_\infty^{hu}\to\Ass\) is a homotopy isomorphism. 
It turns out that \(A_\infty^{hu}\)\n-algebras are precisely homotopy unital \ainf-algebras in the sense of 
Fukaya~\cite{Fukaya:FloerMirror-II,FukayaOhOhtaOno:Anomaly}.

A similar question arises about morphisms of \ainf-algebras: in what sense they form a cofibrant replacement of morphisms of associative algebras? 
We argue that morphisms of algebras over a $\dg$\n-operad come from bimodules over this operad, and bimodules form a model category. 
The \ainf-bimodule $F_1$ describing \ainf-morphisms is freely generated by $n$\n-ary elements $\sff_n$ of degree $1-n$, $n\ge1$, interpreted as linear maps \(\sff_n:A^{\tens n}\to B\) for two \ainf-algebras $A$ and $B$. 
Thus, $F_1$ encodes the usual notion of an \ainf-morphism. 
We prove that the \ainf-bimodule $F_1$ is a cofibrant replacement of the regular $\As$\n-bimodule $\As$ describing morphisms of associative 
$\dg$\n-algebras without units. 
The projection \(F_1\to\As\) is a homotopy isomorphism.

We construct also a cofibrant replacement for the regular $\Ass$\n-bimodule $\Ass$ describing morphisms of associative $\dg$\n-algebras with units. 
This \(A_\infty^{hu}\)\n-bimodule $F_1^{hu}$ is freely generated by elements \(\sff_{n_1;n_2;\dots;n_k}\), $k\ge1$, $n_1$, \dots, $n_k\in\ZZ_{\ge0}$, of arity \(n=\sum_{q=1}^kn_q\), \(n+k\ge2\), and degree $3-n-2k$. 
Instead of $\sff_{0;0}$ a special element denoted $v$ is used. 
The projection map \(F_1^{hu}\to\Ass\) is a homotopy isomorphism. 
It turns out that $(A_\infty^{hu},F_1^{hu})$\n-algebra maps can be identified with homotopy unital \ainf-morphisms, which we define entirely in terms of Fukaya's homotopy unital \ainf-data.

\begin{acknowledgement}
The author is grateful to the referee for stimulating questions which led to improvement of the exposition.
\end{acknowledgement}

\section{Operads}
\subsection{Notations.}
We denote by $\NN$ the set of non-negative integers $\ZZ_{\ge0}$. 
Let $\kk$ denote the ground commutative ring. 
Tensor product \(\tens_\kk\) will be denoted simply $\tens$. 
When a $\kk$\n-linear map $f$ is applied to an element $x$, the result is typically written as $x.f$. 
The tensor product of two maps of graded $\kk$\n-modules $f$, $g$ of certain degree is defined so that for elements $x$, $y$ of arbitrary degree
\[ (x\tens y).(f\tens g) = (-1)^{\deg y\cdot\deg f}x.f\tens y.g.
\]
In other words, we strictly follow the Koszul rule. 
Composition of $\kk$\n-linear maps $X\xrightarrow{f}Y\xrightarrow{g}Z$ is denoted $f\cdot g:X\to Z$.

In general, $\cv$ denotes a cocomplete symmetric closed monoidal category. 
However we shall use only the category of sets $\cv=(\Set,\times)$ at the very beginning, the category of graded $\kk$\n-modules $\cv=(\gr,\tens_\kk)$ and the category of complexes of $\kk$\n-modules (differential graded $\kk$\n-modules) $\cv=(\dg,\tens_\kk)$ for the rest of the article. 
So we adapt our notation to the latter case, changes which should be made for general $\cv$ being obvious.

Informally, non-symmetric operads are collections of operations, which can be performed without permuting the arguments, in algebras of a certain type. 
In this article an operad will mean a \emph{non-symmetric} operad. 
In order to define $\cv$\n-operads we consider the category $\Col\cv=\cv^\NN$ of collections \((\cw(n))_{n\in\NN}\) in $\cv$, equipped with the tensor product $\odot$:
\[ (\cu\odot\cw)(n) = \bigoplus_{n_1+\dots+n_k=n}^{k\ge0}
\cu(n_1)\tens\dots\tens\cu(n_k)\tens\cw(k).
\]
Here $\NN$ is viewed as a discrete category. 
Elements of $\cw(n)$ are said to be $n$\n-ary or to have arity $n$. 
The unit object of $\Col\cv$ is $\1=\kk$ with \(\1(1)=\kk\), $\1(n)=0$ for $n\ne1$.

\begin{definition}
A (non-symmetric) $\dg$\n-operad $\co$ is a monoid in $\Col\dg$,
\((\co,\mu:\co\odot\co\to\co,\eta:\1\to\co)\). 
Morphisms of $\dg$\n-operads are morphisms of monoids in $\Col\dg$. 
The category of $\dg$\n-operads is denoted $\Op$.
\end{definition}

The associative multiplication $\mu$ consists of chain maps
\[ \mu_{n_1,\dots,n_k}:\co(n_1)\tdt\co(n_k)\tens\co(k) \to \co(n_1+\dots+n_k).
\]
The unit $\eta$ identifies with a cycle in \(\co(1)^0\). 
The full name of a $\dg$\n-operad is a differential graded operad.

For any object $X$ of $\cv$ there is the operad of its endomorphisms
\((\END X)(n)=\und\cv(X^{\tens n},X)\). 
For a complex \(X\in\Ob\dg\) the complex \((\END X)(n)=\und\dg(X^{\tens n},X)\) consists of linear maps \(X^{\tens n}\to X\) of certain degree.

\begin{definition}
An algebra $X$ over a $\cv$\n-operad $\co$ is an object $X$ of $\cv$ together with a morphism of operads \(\co\to\END X\).
\end{definition}

\begin{example}
The $\dg$\n-operad $\As$ is the $\kk$\n-linear envelope of the operad $\as$ in $\Set$. 
They have \(\as(0)=\emptyset\), \(\As(0)=0\) and $\as(n)=\{m^{(n)}\}$, $\As(n)=\kk m^{(n)}=\kk$ for $n>0$. 
The identity operation $m^{(1)}$ is the unit of the operad, and $m^{(2)}=m$ is the binary multiplication.
$\as$\n-algebras are semigroups without unit, and $\As$\n-algebras are associative differential graded $\kk$\n-algebras without unit.

Similarly, the operad $\ass$ in $\Set$ with $\ass(n)=\{m^{(n)}\}$ for all $n\ge0$ has the $\kk$\n-linear envelope -- the $\dg$\n-operad $\Ass$ with 
$\Ass(n)=\kk m^{(n)}=\kk$ for all $n\ge0$. 
Clearly, $\ass$\n-algebras are semigroups with unit implemented by the nullary operation $m^{(0)}$, and $\Ass$\n-algebras are associative differential graded $\kk$\n-algebras with multiplication $m^{(2)}$ and unit $m^{(0)}=1^\su$.
\end{example}

\subsection{Free operads.}
The underlying functor from the category $\Op$ of $\dg$\n-operads to the category of collections has a left adjoint \(T:\Col\dg\rightleftarrows\Op:U\). 
Let us discuss the details, which we use for proving that $\Op$ has a model structure. 
There are at least two distinct ways to describe free operads. 
Let us present the first approach. 

The tensor collection of a collection $\cw$ is defined as \(T_\odot\cw=\oplus_{n\ge0}\cw^{\odot n}\). 
The free operad $T\cw$ generated by $\cw$ is given by \(T\cw=T_\odot(\cw_+)/\sim\), where \(\cw_+=\cw\oplus\1\), the $\kk$\n-linear equivalence relation identifies summands
 \(\cw_+^{\odot n}=\cw_+^{\odot n}\odot\1\rMono^{1\odot\bar\eta}
 \cw_+^{\odot n}\odot\cw_+\simeq\cw_+^{\odot(n+1)}\)
and \(\cw_+^{\odot n}\), where \(\bar\eta=\inj_2(1)\), 
\(\inj_2:\1\to\cw\oplus\1\); thus, \(x\tens\bar\eta\sim x\) for any 
\(x\in T_\odot(\cw_+)\). 
Furthermore, if the collection 
\(\cw_+^{\odot a}\odot\bigl((\cw_+\odot\cw_+)\odot\cw_+^{\odot b}\bigr)\) contains inside \(\cw_+\odot\cw_+\) one of the subexpressions
\((\bar\eta\tdt\bar\eta)\tens w_+\), \(w_+\tens\bar\eta\) with \(w_+\in\cw_+\),
then one can interchange these subexpressions. 
The first relation identifies, in particular, 
\(\im\bar\eta\subset T_\odot^1(\cw_+)\) with \(T_\odot^0(\cw_+)\). 
The second relation implies a similar identification inside the factor \(\cw_+^{\odot(c+1)}\) of 
\(\cw_+^{\odot a}\odot(\cw_+^{\odot(c+1)}\odot\cw_+^{\odot b})\) for an arbitrary \(w_+\in\cw_+^{\odot c}\). 
As a consequence, \(\cw_+^{\odot a}\odot(\1\odot\cw_+^{\odot b})\) is identified with \(\cw_+^{\odot a}\odot\cw_+^{\odot b}\).

The multiplication
\[ \cw_+^{\odot m_1}(n_1)\tdt\cw_+^{\odot m_k}(n_k)\tens\cw_+^{\odot l}(k)
\to \cw_+^{\odot(m+l)}(n_1+\dots+n_k)
\]
is obtained by choosing an integer \(m\ge m_1,\dots,m_k\), adding several factors \(\bar\eta\) in order to embed the source into the direct summand
\[ \cw_+^{\odot m}(n_1)\tdt\cw_+^{\odot m}(n_k)\tens\cw_+^{\odot l}(k)
\hookrightarrow \cw_+^{\odot(m+l)}(n_1+\dots+n_k).
\]
In $T\cw$ the result does not depend on $m$, which proves that $T\cw$ is an operad with the unit
 \(\bigl(\1=T_\odot^0(\cw_+)\to T\cw\bigr)=
 \bigl(\1\rTTo^{\bar\eta} T_\odot^1(\cw_+)\to T\cw\bigr)\).

In the second approach the same free operad is presented via trees. 
A \emph{rooted tree} is a connected non-oriented graph with one distinguished vertex (the root) and without cycles (in particular, without multiple edges or loops). 
For a rooted tree \(t=(V(t),E(t))\) there is a partial ordering on the set of all vertices $V(t)$, namely, $u\preccurlyeq v$ iff $v$ lies on the simple path connecting $u$ with the root. 
This ordering makes $t$ into an oriented graph: an edge \((u,v)\) gets the orientation $u\to v$ iff $u\preccurlyeq v$. 
Thus the rooted tree is oriented towards the root. 
An \emph{ordered rooted tree} is a rooted tree with a chosen total ordering of the set of incoming edges for each vertex. 
Up to an isotopy there is only one embedding of an ordered rooted tree into the oriented plane with coordinates $x$, $y$, which agrees with the orientation and such that each predecessor vertex $u$ is higher than its successor vertex $v$ (has bigger coordinate $y$). 
Thus, smaller incoming edges are to the left of the bigger ones.
A \emph{rooted tree with inputs} is a rooted tree $t$ with a chosen subset $\Inp(t)$ of the set $L(t)$ of leaves, vertices without incoming edges. 
For instance, a 1-vertex tree $t^1$ has one leaf -- the root. 
The set of internal vertices is defined as \(\IV(t)=V(t)-\Inp(t)\). 
For example, the picture
\[
\hstretch140
\vstretch70
\begin{tanglec}
\object1\step\object2\Step\object3\step
\\
\node\Step
\\
\nw1\Put(-5,0)[0,0]{\mbox{\normalsize4}}\node\nw1\node
\Put(0,10)[0,0]{\mbox{\normalsize5}}\ne1
\Put(3,0)[0,0]{\mbox{\normalsize6}}\step
\\
\nw1\n\ne1
\\
\object7
\end{tanglec}
\]
describes an ordered rooted tree with inputs $t$ with the root 7, nullary vertex (leaf) 2, unary vertex~4, the set of leaves $L(t)=\{1,2,3,6\}$, the subset of inputs $\Inp(t)=\{1,3,6\}$, non-input leaf~2 and the set of internal vertices \(\IV(t)=\{2,4,5,7\}\).

Extending the standard terminology, we define a \emph{recursive rooted tree with inputs} as a rooted tree with inputs $t$ equipped with a total ordering $\le$ of the set of internal vertices \(\IV(t)\) such that $u\preccurlyeq v$ implies $u\le v$ for any two internal vertices $u$, $v$. 
We say that $\le$ is admissible.

With any rooted tree with inputs $t$ we associate a groupoid \(\cg(t)\), whose objects are admissible total orderings of the set of internal vertices \(\IV(t)\).
By definition between any two objects of $\cg(t)$ there is precisely one morphism.
Thus \(\cg(t)\) is contractible (equivalent to the terminal category with one object and one morphism). 
With any collection $\cw$ in the symmetric Monoidal category 
$\cv=(\cv,\tens^I,\lambda^f,\rho^L=\id)=\dg$ 
\cite[Definition~2.5]{BesLyuMan-book} we associate a functor
\(F(\cw,t):\cg(t)\to\cv\), which takes an ordering $\le$ on \(\IV(t)\) to the object \(\tens^{v\in(\IV(t),\le)}\cw(|v|)\), the tensor product over internal vertices ordered accordingly to $\le$, where $|v|$ is the number of incoming edges for the vertex $v$. 
The only morphism between $\le_1$ and $\le_2$ is taken to
\[ \lambda^{\id_{\IV(t)}}: \tens^{v\in(\IV(t),\le_1)}\cw(|v|) \to
\tens^{v\in(\IV(t),\le_2)}\cw(|v|),
\]
which is nothing else but the iterated symmetry of the category $\cv$, changing the order of tensor factors from $\le_1$ to $\le_2$. 
For instance, if $\le_1$ and $\le_2$ differ only by the order of two vertices $u$ and $v$, then $\lambda^{\id_{\IV(t)}}=1\tens c\tens1$, where the symmetry $c$ is applied on neighbour places occupied by $u$ and $v$. 
In the following a \textbf{tree} means an \textbf{isomorphism class of ordered rooted trees with inputs} unless indicated differently.

Define a $\kk$\n-module \(C(\cw,t)=\colim(F(\cw,t):\cg(t)\to\cv)\). 
Since the groupoid \(\cg(t)\) is contractible, the value of $F(\cw,t)$ on any object of $\cg(t)$ (an ordering of $\IV(t)$) can serve as a colimit. 
For a tree $t$ the \emph{tetris ordering} of $\IV(t)\subset V(t)$ can be used: embed the tree into the plane as above so that a direct predecessor $u$ of a vertex $v$ would have the ordinate \(y(u)=y(v)+1\); then order all vertices lexicographically, from the left to the right in the top row, then comes the row below it, until the bottom row is reached with the only vertex -- the root.

Another form of the free operad generated by a collection $\cw$ is $T'\cw$,
\[ T'\cw(n) =\bigoplus_{(t,\Inp(t))}^{|\Inp(t)|=n} C(\cw,t),
\]
the direct sum of $C(\cw,t)$ over all trees with $n$ inputs. 
The multiplication $\mu$ is the isomorphism from 
\(C(\cw,t_1)\tdt C(\cw,t_k)\tens C(\cw,t')\) to $C(\cw,t)$, where $t$ is the grafting of \((t_1,\dots,t_k;t')\) ordered lexicographically:
\[ \IV(t) =\IV(t_1)\bigsqcup_<\dots\bigsqcup_<\IV(t_k)\bigsqcup_<\IV(t').
\]
Roots of $t_1$, \dots, $t_k$ (in that order) are identified with input vertices of $t'$ (the $k$\n-element set \(\Inp(t')\) is totally ordered). 
The unit is given by the 1\n-vertex tree $t^1$ with one input vertex -- the root. 
Its set of internal vertices is empty and \(C(\cw,t^1)=\kk\). 
There is an obvious isomorphism \(T\cw\to T'\cw\).

\subsection{Model category structures.}
The following theorem is proved by Hinich in 
\cite[Section~2.2]{Hinich:q-alg/9702015}, except that he relates a category with the category of complexes $\dg$, not with its power $\dg^S$. 
A minor change of introducing a set $S$ into the statement does not modify essentially Hinich's proof.

\begin{theorem}[Hinich]\label{thm-Hinich-model}
Suppose that $S$ is a set, a category $\cc$ is complete and cocomplete and
\(F:\dg^S\rightleftarrows\cc:U\) is an adjunction. 
Assume that $U$ preserves filtering colimits. 
For any $x\in S$, $p\in\ZZ$ consider the object $\KK[-p]_x$ of $\dg^S$, 
\(\KK[-p]_x(x)=\bigl(0\to\kk\rTTo^1 \kk\to0\bigr)\) (concentrated in degrees $p$ and $p+1$), \(\KK[-p]_x(y)=0\) for $y\ne x$. 
Assume that the chain map \(U(\inj_2):UA\to U(F(\KK[-p]_x)\sqcup A)\) is a quasi-isomorphism for all objects $A$ of $\cc$ and all $x\in S$, $p\in\ZZ$. 
Equip $\cc$ with the classes of weak equivalences (resp. fibrations) consisting of morphisms $f$ of $\cc$ such that $Uf$ is a quasi-isomorphism (resp. an epimorphism). 
Then the category $\cc$ is a model category.
\end{theorem}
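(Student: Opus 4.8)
The plan is to transfer the standard cofibrantly generated model structure on $\dg^S$ across the adjunction $F\dashv U$, in the manner of Quillen's transfer theorem. The category $\dg^S$ carries the coordinatewise projective structure, cofibrantly generated by the set $I$ of standard generating cofibrations $S^p_x\to\KK[-p]_x$ (the inclusion of the degree-$p$ cycle $\kk$ at the coordinate $x$ into the acyclic disk) and by the set $J=\{0\to\KK[-p]_x\mid x\in S,\ p\in\ZZ\}$ of generating trivial cofibrations, each $\KK[-p]_x$ being acyclic. I would take $FI$ and $FJ$ as the candidate generating cofibrations and generating trivial cofibrations of $\cc$, declare a morphism to be a cofibration iff it has the left lifting property against every $g$ for which $Ug$ is a trivial fibration, and then verify the model category axioms MC1--MC5 for these classes together with the weak equivalences and fibrations prescribed in the statement.

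Much of the bookkeeping is handed to us by the adjunction. By $F\dashv U$ a morphism $g$ has the right lifting property against $FI$ (resp.\ $FJ$) iff $Ug$ has it against $I$ (resp.\ $J$), that is, iff $Ug$ is a trivial fibration (resp.\ a fibration) in $\dg^S$; hence the $FI$-injectives are exactly the trivial fibrations of $\cc$ and the $FJ$-injectives are exactly the fibrations. Smallness is automatic: if $C$ is a domain of a map in $I$ or $J$ then $C$ is small in $\dg^S$, and since $\Hom_\cc(FC,-)\cong\Hom_{\dg^S}(C,U-)$ with $U$ preserving filtering colimits, $FC$ is small relative to $FI$- and $FJ$-cell complexes. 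The small object argument then yields the two functorial factorizations needed for MC5: every morphism factors as a relative $FI$-cell complex followed by a trivial fibration, and as a relative $FJ$-cell complex followed by a fibration.

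The crux, and the step I expect to be the main obstacle, is the \emph{acyclicity} of $FJ$-cell complexes: every relative $FJ$-cell complex $A\to A'$ must be a weak equivalence. Because $F$ preserves the initial object and coproducts, a single attachment along a generator of $J$ is a pushout of $F(0)\to F(\KK[-p]_x)$ along the unique map $F(0)\to A_i$, which is exactly the injection $\inj_2:A_i\to F(\KK[-p]_x)\sqcup A_i$; by the standing hypothesis $U$ of this map is a quasi-isomorphism, hence a weak equivalence. An attachment of a coproduct of generators, and hence an arbitrary $FJ$-cell complex, is a transfinite composition of such elementary attachments. Applying $U$, which preserves filtering colimits, presents $U(A\to A')$ as a filtered colimit of the quasi-isomorphisms $U(A_\alpha\to A_{\alpha+1})$; since homology of complexes of $\kk$\n-modules commutes with filtered colimits, a transfinite induction over the ordinal shows $H(UA)\to H(UA')$ to be an isomorphism. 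Thus $A\to A'$ is a weak equivalence.

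With acyclicity established the remaining axioms are routine. MC1 is the assumed completeness and cocompleteness; MC2 and MC3 reduce through $U$ to two-out-of-three and retract-closure of quasi-isomorphisms and epimorphisms, the cofibrations being retract-closed because they are defined by a lifting property. In MC4 one inclusion is the definition of cofibrations; for the other, let $f$ be a cofibration which is a weak equivalence, and factor $f=p\circ i$ with $i$ a relative $FJ$-cell complex and $p$ a fibration. By acyclicity $i$ is a weak equivalence, so $p$ is one by MC2 and is therefore a trivial fibration; since $f$ is a cofibration it lifts against $p$, and the retract argument exhibits $f$ as a retract of $i$. As $i$ has the left lifting property against all fibrations, so does $f$, which completes MC4 and identifies the trivial cofibrations with the retracts of $FJ$-cell complexes.
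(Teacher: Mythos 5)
Your proposal is correct and coincides in substance with the proof the paper relies on (the paper itself does not reprove the theorem: it cites Hinich and recalls his constructions): Hinich's argument is exactly this transfer along the adjunction $F\dashv U$, with his elementary standard (trivial) cofibrations $A\langle M,\alpha\rangle$ playing the role of your cell attachments --- note that each map in your $FJ$ has domain $F(0)$, the initial object, so attaching such a cell is precisely the coproduct injection $A\to F(\KK[-p]_x)\sqcup A$, which is where the hypothesis on $U(\inj_2)$ enters in both proofs --- and with the same retract argument closing MC4. The only difference is packaging: you invoke the cofibrantly generated machinery on $\dg^S$ and the general small object argument, whereas Hinich runs the cell-attachment induction by hand, using preservation of filtering colimits by $U$ to make the iteration converge.
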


We shall recall also several constructions used in the proof of this theorem.
They describe cofibrations and trivial cofibrations in $\cc$. 
Assume that \(M\in\Ob\dg^S\), \(A\in\Ob\cc\), \(\alpha:M\to UA\in\dg^S\).
Denote by \(C=\Cone\alpha=(M[1]\oplus UA,d_{\Cone})\in\Ob\dg^S\) the cone taken pointwise, that is, for any $x\in S$ the complex \(C(x)=\Cone\bigl(\alpha(x):M(x)\to(UA)(x)\bigr)\) is the usual cone.
Denote by \(\bar\imath:UA\to C\) the obvious embedding.
Let \(\eps:FU(A)\to A\) be the adjunction counit.
Following Hinich \cite[Section~2.2.2]{Hinich:q-alg/9702015} define an object \(A\langle M,\alpha\rangle\in\Ob\cc\) as the pushout
\begin{diagram}[w=4em,LaTeXeqno]
FU(A) & \rTTo^\eps &A
\\
\dTTo<{F\bar\imath} &&\dTTo>{\bar\jmath}
\\
FC &\rTTo^g &\NWpbk A\langle M,\alpha\rangle
\label{dia-pushout-A<M-alpha>}
\end{diagram}
If $\alpha=0$, then \(A\langle M,0\rangle\simeq F(M[1])\sqcup A\) and $\bar\jmath=\inj_2$ is the canonical embedding.
We say that $M$ consists of free $\kk$\n-modules if for any $x\in S$, $p\in\ZZ$ the $\kk$\n-module $M(x)^p$ is free.

The proof contains the following important statements.
If $M$ consists of free $\kk$\n-modules and $d_M=0$, then 
\(\bar\jmath:A\to A\langle M,\alpha\rangle\) is a cofibration.
It might be called an \emph{elementary standard cofibration}. If
\[ A \to A_1 \to A_2 \to \cdots
\]
is a sequence of elementary standard cofibrations, $B$ is a colimit of this diagram, then the ``infinite composition'' map $A\to B$ is a cofibration called a \emph{standard cofibration} \cite[Section~2.2.3]{Hinich:q-alg/9702015}.

Assume that \(N\in\Ob\dg^S\) consists of free $\kk$\n-modules, $d_N=0$ and \(M=\Cone\bigl(1_{N[-1]}\bigr)=(N\oplus N[-1],d_{\Cone})\).
Then for any morphism \(\alpha:M\to UA\in\dg^S\) the morphism 
\(\bar\jmath:A\to A\langle M,\alpha\rangle\) is a trivial cofibration in $\cc$ and a standard cofibration, composition of two elementary standard cofibrations.
It is called a \emph{standard trivial cofibration}.
Any (trivial) cofibration is a retract of a standard (trivial) cofibration \cite[Remark~2.2.5]{Hinich:q-alg/9702015}.

When \(F:\dg^S\to\cc\) is the functor of constructing a free $\dg$\n-algebra of some kind, the maps $\bar\jmath$ are interpreted as ``adding variables to kill cycles''.

Hinich uses \thmref{thm-Hinich-model} for describing a model structure of the category of symmetric $\dg$\n-operads \cite{Hinich:q-alg/9702015}. 
I was not able to devise a non-symmetric analogue of his argument around Lemma~6.5.1 [\textit{ibid.}].
Thus, I have chosen a different approach.

\begin{proposition}\label{pro-Operads-model-category}
Define weak equivalences (resp. fibrations) in $\Op$ as morphisms $f$ of $\Op$ such that $Uf$ is a quasi-isomorphism (resp. an epimorphism). 
These classes make $\Op$ into a model category.
\end{proposition}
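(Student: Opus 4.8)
The plan is to deduce the statement from Hinich's transfer theorem \thmref{thm-Hinich-model}, applied to the free--forgetful adjunction \(T:\Col\dg=\dg^\NN\rightleftarrows\Op:U\) with \(S=\NN\). Thus I take \(F=T\), the free operad functor described above, and \(U\) the underlying collection functor. First I would record the formal hypotheses. The functor \(U\) is monadic, the monad \(UT\) being the free operad monad; since \(\odot\) preserves filtered colimits in each variable (direct sums and \(\tens\) do), the monad \(UT\) is finitary, hence accessible. As \(\Col\dg=\dg^\NN\) is locally presentable, \(\Op\) is locally presentable as well, in particular complete and cocomplete, and \(U\) creates, hence preserves, filtered colimits. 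This verifies every hypothesis of \thmref{thm-Hinich-model} save the crucial quasi-isomorphism condition.

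The heart of the argument is to show that for every operad \(A\), every \(n\in\NN\) and every \(p\in\ZZ\) the canonical map \(U(\inj_2):UA\to U\bigl(T(\KK[-p]_n)\sqcup A\bigr)\) is a quasi-isomorphism. I would compute the coproduct by means of the tree description of free operads from the previous subsection. Write \(M=\KK[-p]_n\), so that \(M\) is concentrated in arity \(n\), and realize \(U\bigl(T(M)\sqcup A\bigr)\) as a direct sum over reduced rooted trees with inputs whose internal vertices carry one of two kinds of label --- a generator of \(M\) (at a vertex with \(n\) incoming edges) or an element of \(A\) --- subject to the relation that two adjacent \(A\)\n-labelled vertices are contracted by the multiplication of \(A\); ``reduced'' means no edge joins two \(A\)\n-vertices, which makes this decomposition canonical. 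The summand attached to a tree \(t\) is the tensor product over \(\IV(t)\) of the label spaces, one factor \(M(n)\) for each \(M\)\n-vertex and one factor \(A(|v|)\) for each \(A\)\n-vertex. The differential is the derivation induced by \(d_M\) and \(d_A\); applied to a label it returns a label of the \emph{same} kind at the \emph{same} vertex, and the contraction relation involves no \(M\)\n-vertices. Hence the number \(k\) of \(M\)\n-labelled vertices is strictly preserved, and \(U\bigl(T(M)\sqcup A\bigr)=\bigoplus_{k\ge0}C_k\) splits as a direct sum of subcomplexes with \(C_0=UA\) (trees with only \(A\)\n-labels) identified with the image of \(\inj_2\).

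It then remains to see that \(C_k\) is acyclic for every \(k\ge1\). The complex \(M\), which equals \(\bigl(0\to\kk\xrightarrow{1}\kk\to0\bigr)\) placed in arity \(n\), is the cone of an identity map and is therefore contractible, the contracting homotopy being \(\kk\)\n-linear and valued in free \(\kk\)\n-modules. Consequently \(M^{\tens k}\) is contractible, and by the Koszul sign rule each summand of \(C_k\) is, up to the symmetry of \(\dg\), isomorphic to \(M^{\tens k}\tens\bigl(\text{a tensor product of spaces }A(|v|)\bigr)\); tensoring the contractible complex \(M^{\tens k}\) with any complex yields a contractible, hence acyclic, complex. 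Summing over trees, \(C_k\) is acyclic for \(k\ge1\), so \(U(\inj_2)\) is the inclusion of the direct summand \(UA=C_0\) with acyclic complement, and is a quasi-isomorphism. Applying \thmref{thm-Hinich-model} now yields the model structure, whose weak equivalences and fibrations are exactly the morphisms \(f\) with \(Uf\) a quasi-isomorphism, respectively an epimorphism, as asserted.

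I expect the main obstacle to be the explicit normal form for the coproduct \(T(M)\sqcup A\): one must check that reduced trees give a canonical direct-sum decomposition of \(U\bigl(T(M)\sqcup A\bigr)\), that the \(M\)\n-vertex count descends to the quotient by the contraction relation, and that it is preserved on the nose by the differential. Once this bookkeeping is in place, contractibility of \(M\) makes the homological estimate immediate, and the categorical inputs (monadicity of \(U\) and accessibility of \(UT\)) are routine.
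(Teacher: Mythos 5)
Your proposal follows the same route as the paper: both reduce the statement to Hinich's \thmref{thm-Hinich-model} with $S=\NN$, and both establish the crucial hypothesis --- that $U(\inj_2):UA\to U\bigl(T(\KK[-p]_n)\sqcup A\bigr)$ is a quasi-isomorphism --- by decomposing the coproduct into tree-indexed tensor products and using that every summand containing at least one vertex labelled by the contractible collection $M=\KK[-p]_n$ is contractible. (The paper proves slightly more, exhibiting $\beta$ with $\inj_2\cdot\beta=1$ and a telescoping homotopy $\hat h$ between the identity and $\beta\cdot\inj_2$; your summand-by-summand contraction is the same computation.)

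There is, however, a genuine error in the step you yourself flagged as the main obstacle: your normal form for the coproduct. Indexing by ``reduced'' trees --- no $A$--$A$ edges, but $M$--$M$ edges allowed --- does not give a direct sum decomposition of $U\bigl(T(M)\sqcup A\bigr)$, because of the operad unit of $A$. If $x,y\in M$ label adjacent vertices, then in $T(M)\sqcup A$ the composite of $x$ and $y$ equals the same configuration with the unit $1_A\in A(1)$ interposed; hence the summand of the $M$--$M$-edge tree does not sit beside, but maps into, the summand of the tree with a unary $A$-vertex inserted into that edge, so your total sum maps onto the coproduct non-injectively. (The same phenomenon identifies the one-vertex tree with the unit inside the unary $A$-corolla.) The paper's remedy is to realise $T\cn\sqcup\co$ as the colimit of a diagram containing both the contraction-composition maps and the unit-insertion maps, and to take one terminal tree per connected component: trees with no monochromatic edges at all, so that consecutive $M$-vertices are always separated by a (possibly unary) $A$-vertex; this yields decomposition \eqref{eq-TNO=C(NOt)}. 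With that corrected indexing the rest of your argument goes through unchanged: the differential still preserves the number of $M$-labelled vertices, the $k=0$ part is exactly $UA$, and every summand with $k\ge1$ is contractible, hence acyclic.
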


\begin{proof}
Let us verify that the hypotheses of Hinich's theorem are satisfied for $S=\NN$.
The category $\Op$ of $\dg$\n-operads is cocomplete. 
In particular, the coproduct of $\dg$\n-operads $\ca$ and $\cb$ is a certain quotient of the free operad $T(\ca\oplus\cb)$. 
The free operad is a sum of complexes placed in arity $|\Inp(t)|$
\[ F(\ca,\cb;t)(\le) =\tens^{v\in(\IV(t),\le)} c(v)(|v|)
\]
corresponding to trees $t$, whose internal vertices $v\in\IV(t)$ are coloured in two colours: $c(v)\in\{\ca,\cb\}$ is the colour of $v$. 
A coloured tree $t$ is a \emph{contraction} of a coloured tree $t'$ if the ends of each contracted edge of $t'$ are coloured with the same colour and the colouring of $t$ is induced by that of $t'$. 
In particular, colourless vertices form $\Inp t=\Inp t'$. 
A coloured tree $t'$ is a \emph{reduction} of a coloured tree $t$ if it differs by removing several unary vertices. 
With such a contraction or reduction pair \((t',t)\) we associate a chain map $F(\ca,\cb;t')(\le')\to F(\ca,\cb;t)(\le)$ choosing an admissible total ordering $\le'$ of $\IV(t')$ such that glued vertices of $t'$ are neighbours, taking the induced ordering $\le$ of $\IV(t)$ and applying the composition maps of the corresponding operads. 
For the reduction case we insert units of the corresponding operads. 
Contraction compositions and insertion units form a diagram.
As a category the diagram is generated by contractions and insertions of unary vertices, subject to the relations: any two paths from one coloured tree to another are equal. 
Colimit of this diagram is precisely \(\ca\sqcup\cb\). 
The composition is the grafting followed by contraction composition maps.

Assume that $\cn$ is a $\dg$\n-collection and $\co$ is a $\dg$\n-operad.
Constructing \(T\cn\sqcup\co\) is easier than the general coproduct: it is the colimit of a diagram of trees coloured with $\cn$ and $\co$, consisting of contraction compositions if both ends of an edge are coloured with $\co$ and insertion units, which means insertion of a unary vertex coloured with $\co$ (that is $\co(1)$). 
Again shape of the diagram is subject to the relations: any two paths from one coloured tree to another are equal. 
Observe that the diagram is a disjoint union of connected components and each of them has a terminal object. 
The following list $\cl$ contains precisely one terminal object from each connected component: coloured trees without edges whose ends have the same colour. 
We call such trees \emph{terminal}. 
Since an operad is an algebra in an appropriate monoidal category, any two morphisms with the same source and target composed of multiplication in an operad and insertion of units coincide. 
Therefore, the colimit along a connected component is isomorphic to
\[ C(\cn,\co;t) =\colim_{\le\in\cg(t)} F(\cn,\co;t)(\le)
\]
for the terminal object $t$. Hence,
\begin{equation}
T\cn\sqcup\co =\bigoplus_{t\in\cl} C(\cn,\co;t),
\label{eq-TNO=C(NOt)}
\end{equation}
the sum over all terminal trees $t$.

Assume that $\cn$ is a contractible $\dg$\n-collection. 
For instance, $\cn=\KK[-p]_x$ is contractible for $x\in\NN$, $p\in\ZZ$. 
Let us prove that the operad morphism \(\alpha=\inj_2:\co\to T\cn\sqcup\co\) is homotopy invertible. 
In fact, the morphism of operads \(\beta:T\cn\sqcup\co\to\co\), 
\(\beta\big|_\cn=0\), \(\beta\big|_\co=1\) gives \(\alpha\cdot\beta=1_\co\), and \(g=\beta\cdot\alpha:T\cn\sqcup\co\to T\cn\sqcup\co\) is homotopic to \(f=1_{T\cn\sqcup\co}\) in the $\dg$\n-category of collections, as we show next.
There is a degree $-1$ map \(h:\cn\to\cn\) such that \(dh+hd=1_\cn\). 
It is extended by 0 to the map \(h'=h\oplus0:\cn\oplus\co\to\cn\oplus\co\), which satisfies \(dh'+h'd=f-g:\cn\oplus\co\to\cn\oplus\co\). 
For any terminal coloured tree $t$ the morphisms $f$, $g$ preserve the summand
\(C(\cn,\co;t)\simeq F(\cn,\co;t)(\le)\) with some chosen admissible ordering of $\IV(t)$. 
These morphisms are obtained by applying \(f\big|_\cn=1:\cn\to\cn\), 
\(f\big|_\co=1:\co\to\co\), \(g\big|_\cn=0:\cn\to\cn\), 
\(g\big|_\co=1:\co\to\co\) to each \(\cn(k)\) or \(\co(k)\) corresponding to an internal vertex of $t$. 
Define a $\kk$\n-endomorphism of degree $-1$
\begin{multline*}
\hat{h} =\sum_{v\in(\IV(t),\le)} f\tdt f\tens h'\tens g\tdt g:
\\
F(\cn,\co;t)(\le) \to F(\cn,\co;t)(\le) =\tens^{v\in(\IV(t),\le)}c(v)(|v|),
\end{multline*}
where $h'$ is applied on the place indexed by $v$. Then
\[ d\hat{h} +\hat{h}d =\sum_{v\in(\IV(t),\le)} f\tdt f\tens(f-g)\tens g\tdt g
=f\tdt f -g\tdt g =f-g,
\]
therefore, $f$ and $g$ are homotopic to each other.
\end{proof}

General definition of a cofibrant replacement takes in $\Op$ the following form: a cofibrant replacement of a $\dg$\n-operad $\co$ is a trivial fibration (a fibration and a weak equivalence simultaneously) \(\ca\to\co\) such that the unit \(\eta:\1=\kk\to\ca\) is a cofibration in $\Op$.

\begin{example}
Using Stasheff's associahedra one proves that there is a cofibrant replacement \(A_\infty\to\As\) where the graded operad $A_\infty$ is freely generated by $n$\n-ary operations $m_n$ of degree $2-n$ for $n\ge2$. 
The differential is found as
\[  m_n\partial =-\sum_{j+p+q=n}^{1<p<n}
(-)^{jp+q}(1^{\tens j}\tens m_p\tens1^{\tens q})\cdot m_{j+1+q}.
\]
Basis \((m(t))\) of \(A_\infty=T\bigl(\kk\{m_n\mid n\ge2\}\bigr)\) over $\kk$ is indexed by isomorphism classes of ordered rooted trees $t$ without unary vertices.
The tree $t^1$ which has just one vertex (the root and the leaf) corresponds to the unit from $A_\infty(1)$.

Algebras over the $\dg$\n-operad $A_\infty$ are precisely \ainf-algebras in the usual sense: complexes $A$ with the differential $m_1$ and operations \(m_n:A^{\tens n}\to A\), \(\deg m_n=2-n\), for $n\ge2$ such that
\[ \sum_{j+p+q=n}(-)^{jp+q}
(1^{\tens j}\tens m_p\tens1^{\tens q})\cdot m_{j+1+q} =0
\]
for all $n\ge2$, for instance, binary multiplication $m_2$ is a chain map, it is associative up to the boundary of the homotopy $m_3$:
\[ (m_2\tens1)m_2 -(1\tens m_2)m_2 =m_3m_1
+(1\tens1\tens m_1 +1\tens m_1\tens1 +m_1\tens1\tens1)m_3,
\]
and so on. 
Moreover, the chain map \(A_\infty(n)\to\As(n)\) is homotopy invertible, for each \(n\ge1\). 
One way to prove it is implied by a remark of Markl 
\cite[Example~4.8]{Markl:ModOp}. 
Another proof uses the operad of Stasheff associahedra \cite{Stasheff:HomAssoc} and the configuration space of $(n+1)$\n-tuples of points on a circle considered by Seidel in his book \cite{SeidelP-book-Fukaya}. 
Details can be found in \cite[Proposition~1.19]{BesLyuMan-book}.
\end{example}

\subsection{Differential graded objects via graded objects.}
Assume that
\begin{diagram}
\dg^S &\pile{\rTTo^F \\ \lTTo_U} &\cc
\\
\dTTo<{P^S} &= &\dTTo>{\tilde P}
\\
\gr^S &\pile{\rTTo^{\bar F} \\ \lTTo_{\bar U}} &\CG
\end{diagram}
is a morphism of adjunctions in the following sense: $P:\dg\to\gr$ is the underlying functor, the functor $\tilde P$ preserves finite colimits, 
\((F,U,\eps:F\circ U\to\Id_\cc,\eta:\Id_{\dg^S}\to U\circ F)\) and 
\((\bar F,\bar U,\bar\eps,\bar\eta)\) are adjunctions such that
\begin{gather*}
\tilde P\circ F =\bar F\circ P^S, \qquad P^S\circ U =\bar U\circ\tilde P,
\\
\tilde P\circ\eps =\bar\eps\circ\tilde P: \tilde P\circ F\circ U
=\bar F\circ\bar U\circ\tilde P \to \tilde P: \cc \to \CG,
\\
P^S\circ\eta =\bar\eta\circ P^S: 
P^S \to P^S\circ U\circ F =\bar U\circ\bar F\circ P^S: \dg^S \to \gr^S.
\end{gather*}
All these assumptions are satisfied, for instance, if we take for $\cc$ the category $\Op$ of $\dg$\n-operads and for $\CG$ the category of 
$\gr$\n-operads.
Assume that \(M\in\Ob\dg^S\), \(A\in\Ob\cc\), \(\alpha:M\to UA\in\dg^S\).
We would like to compute \(\tilde P(A\langle M,\alpha\rangle)\).
Notice that
\[ P^S(\Cone\alpha) =P^S(M[1])\oplus P^SUA =P^SM[1]\oplus\bar U\tilde PA
\]
does not depend on $\alpha$.
Applying $\tilde P$ to pushout square~\eqref{dia-pushout-A<M-alpha>}, we get the pushout square
\begin{diagram}[w=5em]
\bar F\bar U\tilde PA & \rTTo^{\bar\eps\tilde P} &\tilde PA
\\
\dTTo<{\bar F\inj_2} &&\dTTo>{\tilde P\bar\jmath}
\\
\bar F(P^SM[1]\oplus\bar U\tilde PA) &\rTTo^{\tilde Pg} 
&\NWpbk \tilde P(A\langle M,\alpha\rangle)
\end{diagram}
Denoting \(N=P^SM[1]\), \(B=\tilde PA\), we recognize the above diagram as the colimiting cone
\begin{diagram}[w=4em]
\bar F\bar UB &\rEq &\bar F\bar UB &\rTTo^{\bar\eps} &B
\\
\dTTo<{\bar F\inj_2} &&\dTTo>{\inj_2} &&\dTTo>{\inj_2}
\\
\bar F(N\oplus\bar UB) &\rTTo^\sim &\bar FN\sqcup\bar F\bar UB 
&\rTTo^{1\sqcup\bar\eps} &\NWpbk \bar FN\sqcup B
\end{diagram}
Hence, 
	\(\tilde P(A\langle M,\alpha\rangle)\simeq\bar FN\sqcup B
	=\bar F(P^SM[1])\sqcup\tilde PA\).

If \(M[1]=\kk\{x_1,\dots,x_t\}\in\Ob\dg^S\) is a free graded $\kk$\n-module spanned by elements $x_1$, \dots, $x_t$, we denote \(A\langle M,\alpha\rangle\) as \(A\langle x_1,\dots,x_t\rangle\).
In this case 
	\(\tilde P(A\langle M,\alpha\rangle)
	\simeq\bar F(\kk\{x_1,\dots,x_t\})\sqcup\tilde PA\)
is freely generated by $x_1$, \dots, $x_t$ over $\tilde PA$.
This applies to differential graded and graded operads and, as we shall see below, to bimodules over them.
The differential for the differential graded operad (bimodule) 
\(A\langle M,\alpha\rangle\) is recovered in the underlying graded operad (bimodule) \(\tilde P(A\langle M,\alpha\rangle)\) via the Koszul rule.

\subsection{Unital \texorpdfstring{$A_\infty$}{A8}-algebras.}
Now we are interested in finding a cofibrant replacement for the operad $\Ass$ of unital $\dg$\n-algebras. 
This will be interpreted as including non-strict units and associated homotopies into the structure of a \ainf-algebra. 
Approaching unital \ainf-algebras (and categories) we start with strictly unital ones. 
They are governed by the operad $A_\infty^\su$ generated over $A_\infty$ by a nullary degree 0 cycle $1^\su$ subject to the following relations:
\[ (1\tens1^\su)m_2 =1, \quad (1^\su\tens1)m_2 =1, \quad
(1^{\tens a}\tens1^\su\tens1^{\tens b})m_{a+1+b} =0 \text{ \ if \ } a+b>1.
\]
The rows of the following diagram in \(\Col\dg\)
\begin{diagram}[LaTeXeqno]
0 &\rTTo &A_\infty &\rTTo &A_\infty^\su &\rTTo &\kk1^\su &\rTTo &0
\\
&& \dFib<{htis} &&\dFib<{htis}>{p'} &&\dEq
\\
0 &\rTTo &\As &\rTTo &\Ass &\rTTo &\kk1^\su &\rTTo &0
\label{dia-AAk1-AsAs1k1}
\end{diagram}
are exact sequences, split in the obvious way. 
Therefore, the middle vertical arrow $p'$ is a homotopy isomorphism.

There is a standard trivial cofibration and a homotopy isomorphism
 $A_\infty^\su\rCof~\Sim A_\infty^\su\langle1^\su-i,j\rangle=
 A_\infty^\su\langle i,j\rangle$,
where $i$, $j$ are two nullary operations, $\deg i=0$, $\deg j=-1$, with \(i\partial=0\), \(j\partial=1^\su-i\). 
The projection $p'$ decomposes as
\[ p' =\bigl( A_\infty^\su \rCof~\Sim^{htis}
A_\infty^\su\langle i,j\rangle \rTTo^{p''} \Ass\bigr),
\]
where \(p''(1^\su)=1^\su\), \(p''(i)=1^\su\), \(p''(m_2)=m_2\) and other generators go to 0. 
Hence, the projection $p''$ is a homotopy isomorphism as well.

A cofibrant replacement \(A_\infty^{hu}\to\Ass\) is constructed as a 
$\dg$\n-suboperad of \(A_\infty^\su\langle i,j\rangle\) generated as a graded operad by $i$ and $n$\n-ary operations of degree $4-n-2k$
\[ m_{n_1;n_2;\dots;n_k} =(1^{\tens n_1}\tens j\tens1^{\tens n_2}\tens
j\tens\dots\tens1^{\tens n_{k-1}}\tens j\tens1^{\tens n_k})m_{n+k-1},
\]
where \(n=\sum_{q=1}^kn_q\), $k\ge1$, $n_q\ge0$, \(n+k\ge3\).
Let us show that the graded operad \(A_\infty^{hu}\) is free.

First of all, the free graded operad generated by \(m_{n_1;\dots;n_k}\) is embedded into the graded operad \(A_\infty\langle j\rangle\), moreover, there is a split exact sequence of graded collections
\[ 0 \to 
T\bigl(\kk\{m_{n_1;\dots;n_k}
\mid k>0,\, n_q\ge0,\, {k+\ttt\sum_{q=1}^k} n_q\ge3\}\bigr)
\to A_\infty\langle j\rangle \to \kk j \to 0.
\]
In fact, the $\kk$\n-basis of \(A_\infty\langle j\rangle\) is indexed by trees $(t,\Inp(t))$ without unary vertices. 
This includes the tree \(t=\bullet^j\), \(V(t)=\{*\}\), \(\Inp(t)=\emptyset\), which corresponds to the element $j$ itself.
The element of \(A_\infty\langle j\rangle\) corresponding to $(t,\Inp(t))$ is
\[ (1^{\tens n_1}\tens j\tens1^{\tens n_2}\tens j\tdt1^{\tens n_{k-1}}\tens
j\tens1^{\tens n_k})m(t),
\]
where $j$ are put on places from \(L(t)-\Inp(t)\).
On the other hand, the $\kk$\n-basis of 
 \(T\bigl(\kk\{m_{n_1;\dots;n_k}\mid 
 k>0,\,n_q\ge0,\,k+\sum_{q=1}^kn_q\ge3\}\bigr)
 =\kk\bigl\langle m_{n_1;\dots;n_k}
 \mid {k+\ttt\sum_{q=1}^k} n_q\ge3 \bigr\rangle
 \)
is indexed by the same set of trees except for \(m_{0;0}=\bullet^j\) which is excluded by the inequality.
Numbers $n_q$ in the indexing sequence $n_1$; \dots; $n_k$ are represented by $n_q$ edges, and semicolons are replaced with edges starting with non-input leaves.
All these edges have a common end, a vertex representing \(m_{n_1;\dots;n_k}\).

Adding freely $i$ and using \eqref{eq-TNO=C(NOt)} we obtain the split exact sequence of graded collections
\begin{equation}
0 \to 
\kk\bigl\langle m_{n_1;\dots;n_k}, i
\mid k>0,\, n_q\ge0,\, {k+\ttt\sum_{q=1}^k} n_q\ge3 \bigr\rangle
\to A_\infty\langle j,i\rangle \to \kk j \to 0.
\label{eq-0-T(km)-A8ji-kj-0}
\end{equation}
The quotient is still of rank 1, because $j$ is nullary.

Similarly, from the top row of diagram~\eqref{dia-AAk1-AsAs1k1} we deduce the splittable exact sequence of graded collections
\[ 
0\to A_\infty\langle i\rangle\to A_\infty^\su\langle i\rangle\to \kk1^\su\to 0.
\]
We may choose the splitting of this exact sequence as indicated below:
\[ 
0 \to A_\infty\langle i\rangle \to A_\infty^\su\langle i\rangle \to 
\kk\{1^\su-i\} \to 0.
\]
Adding freely $j$ we get from \eqref{eq-TNO=C(NOt)} the split exact sequence
\begin{equation}
0 \to A_\infty\langle i,j\rangle \to A_\infty^\su\langle i,j\rangle \to 
\kk\{1^\su-i\} \to 0.
\label{eq-0-A8ji-A8suji-k1j-0}
\end{equation}
Combining \eqref{eq-0-T(km)-A8ji-kj-0} with \eqref{eq-0-A8ji-A8suji-k1j-0} we get the split exact sequence of graded collections
\[ 0 \to 
\kk\bigl\langle m_{n_1;\dots;n_k}, i
\mid k>0,\, n_q\ge0,\, {k+\ttt\sum_{q=1}^k} n_q\ge3 \bigr\rangle
\to A_\infty^\su\langle i,j\rangle \to \kk\{1^\su-i,j\} \to 0.
\]
The image of the embedding is precisely \(A_\infty^{hu}\), which allows to rewrite the sequence as
\begin{equation}
0 \rTTo A_\infty^{hu} \rTTo^{i'} A_\infty^\su\langle i,j\rangle \rTTo
\kk\{1^\su-i,j\} \rTTo 0.
\label{eq-0-Ahu-Asuij-k1ij-0}
\end{equation}

The differential in $A_\infty^{hu}$ is computed through that of \(A_\infty^\su\langle i,j\rangle\). 
Actually, \eqref{eq-0-Ahu-Asuij-k1ij-0} is a split exact sequence in $\Col\dg$, where the third term obtains the differential \(j\partial=1^\su-i\). 
The third term is contractible, which shows that the inclusion $i'$ is a homotopy isomorphism in $\Col\dg$. 
Hence, the epimorphism \(p=i'\cdot p'':A_\infty^{hu}\to\Ass\) is a homotopy isomorphism as well, where the $\dg$\n-operad $A_\infty^{hu}$ is freely generated by the homotopy unit $i$ (a degree 0 cycle) and operations \(m_{n_1;n_2;\dots;n_k}\) of degree $4-n-2k$.

In order to prove that $\1\to A_\infty^{hu}$ is a standard cofibration we present it as a colimit of sequence of elementary standard cofibrations \(\1=\kk\to\cd_0\to\cd_1\to\cd_2\to\dots\), where
\[ \cd_r =T\bigl(\kk\{i,m_{n_1;\dots;n_k} \mid
\deg m_{n_1;\dots;n_k} \ge-r\}\bigr)
=\kk\langle i,m_{n_1;\dots;n_k} \mid \deg m_{n_1;\dots;n_k} \ge-r \rangle.
\]

An $A_\infty^{hu}$\n-algebra (a homotopy unital \ainf-algebra) is a complex \((A,m_1)\) of $\kk$\n-modules equipped with the homotopy unit (a cycle)
\(i\in A^0\) and operations \(m_{n_1;\dots;n_k}:A^{\tens n}\to A\) subject to relations
\[ [m_{n_1;\dots;n_k},m_1] \equiv m_{n_1;\dots;n_k}\cdot m_1
-(-1)^n\sum_{a+1+c=n}
(1^{\tens a}\tens m_1\tens1^{\tens c})\cdot m_{n_1;\dots;n_k}
=(m_{n_1;\dots;n_k})\partial.
\]
Another (implicit) presentation of this answer due to Fukaya appeared before the question of constructing a cofibrant replacement for $\Ass$.

\begin{definition}[Fukaya, {\cite[Definition~5.11]{Fukaya:FloerMirror-II}}]
A \emph{homotopy unital structure} of an \ainf-algebra $A$ is an 
\ainf-structure $m^+$ of the graded $\kk$\n-module
\[ A^+ = A\oplus\kk1^\su\oplus\kk j
\]
(with \(\deg1^\su=0\), \(\deg j=-1\)) such that:
\begin{enumerate}
\renewcommand{\labelenumi}{(\arabic{enumi})}
\item $A^+$ is a strictly unital \ainf-algebra with the strict unit $1^\su$;

\item the element \(i=1^\su-jm^+_1\) (homotopy unit) is contained in $A$;

\item the embedding \(A\hookrightarrow A^+\) agrees with all \ainf-operations $m_n$, $n\ge1$, in the strict sense;

\item \((A\oplus\kk j)^{\tens n}m^+_n\subset A\), for each $n>1$.
\end{enumerate}
\end{definition}

Notice that degree $-1$ operations \(m_{1,0},m_{0,1}:A\to A\) are unit homotopies, that is,
\[ m_{10}m_1+m_1m_{10} =1-(1\tens i)m_2 \equiv m_{10}\partial, \qquad
m_{01}m_1+m_1m_{01} =1-(i\tens1)m_2 \equiv m_{01}\partial.
\]

While homotopy unitality of an \ainf-algebra \((A,(m_n)_{n\ge1})\) means an \emph{extra structure}, there is another notion of unitality which is a \emph{property} of an \ainf-algebra:

\begin{definition}[{\cite[Definition~7.3]{Lyu-AinfCat}}]
An \ainf-algebra \((A,(m_n)_{n\ge1})\) is unital iff it has a cycle
\(i\in A^0\) such that the following chain maps are homotopic:
\[ (1\tens i)m_2 \sim 1 \sim (i\tens1)m_2: A \to A.
\]
\end{definition}

This definition has proved quite useful. 
It allowed to obtain many results expected for unitality, although to work with it one has to construct some homotopies and to prove acyclicity of certain inductively constructed cones. 
To verify unitality of a given \ainf-algebra is obviously easier than to extend explicitly the given \ainf-structure to a homotopy unital \ainf-structure.
However, the two notions are equivalent in the following sense:

\begin{theorem}[Lyubashenko and Manzyuk 
{\cite[Theorem~3.7]{LyuMan-unitalAinf}}]
An arbitrary \ainf-algebra is unital iff it admits a homotopy unital structure. 
The given \ainf-structure \((A,i,(m_n)_{n\ge1})\) extends to a homotopy unital structure \((A,i,m_{n_1;\dots;n_k})\).
\end{theorem}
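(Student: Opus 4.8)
The plan is to prove the two implications separately. The forward direction (homotopy unital $\Rightarrow$ unital) is essentially a direct reading of the structure: assuming $A$ carries an $A_\infty^{hu}$-algebra structure, I would extract the two degree $-1$ operations $m_{1,0}$, $m_{0,1}$ and invoke the relations recorded just before Fukaya's definition, namely $m_{10}\partial=1-(1\tens i)m_2$ and $m_{01}\partial=1-(i\tens1)m_2$. Since $(m_{10})\partial=m_{10}m_1+m_1m_{10}$, these say precisely that $(1\tens i)m_2\sim1$ and $(i\tens1)m_2\sim1$ as chain maps $A\to A$, with $i$ a cycle in $A^0$; that is the definition of unitality, so this direction is immediate.

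For the converse I would reformulate the goal operadically. A homotopy unital structure extending the given data is exactly an operad morphism $\psi\colon A_\infty^{hu}\to\END A$ whose restriction along the suboperad inclusion $A_\infty\langle i\rangle\hookrightarrow A_\infty^{hu}$ (generated by the $m_n=m_{n;}$ and the nullary cycle $i$) equals the given morphism $\phi'$ encoding the $A_\infty$-structure together with the chosen unit cycle $i$. Since $A_\infty^{hu}$ is freely generated over $A_\infty\langle i\rangle$ by the operations $m_{n_1;\dots;n_k}$ with $k\ge2$, I would construct $\psi$ by induction on the generators ordered by increasing $n+2k$ (equivalently decreasing degree $4-n-2k$), using the filtration $\cd_0\to\cd_1\to\cd_2\to\cdots$ of $A_\infty^{hu}$ by elementary standard cofibrations already exhibited in the text. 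Defining $\psi$ on a new generator $x=m_{n_1;\dots;n_k}$ amounts to choosing $\psi(x)\in\und\dg(A^{\tens n},A)$ with $d\psi(x)=\psi(x\partial)$, where $x\partial$ involves only previously defined generators; the element $\psi(x\partial)$ is automatically an $m_1$-cycle, so the obstruction to extending lives in $H^{\,5-n-2k}(\und\dg(A^{\tens n},A))$.

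The first nontrivial step adds $m_{1,0}$ and $m_{0,1}$ in degree $-1$, and here the required primitives are exactly the homotopies $1\sim(1\tens i)m_2$ and $1\sim(i\tens1)m_2$ furnished by the unitality hypothesis; this base case is where the hypothesis enters directly. The main obstacle is to show that \emph{all} higher obstruction classes vanish, since $H^\bullet(\und\dg(A^{\tens n},A))$ need not vanish for formal reasons. I expect this to require carrying along a stronger inductive datum than ``$\psi$ defined so far'': concretely, the explicit contracting homotopy of the contractible complex $\kk\{1^\su-i\}\oplus\kk j$ (with $j\partial=1^\su-i$) sitting inside the strictly unital envelope $A^+=A\oplus\kk1^\su\oplus\kk j$. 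Passing to Fukaya's equivalent $A^+$-model, the task becomes extending the strictly unital $A_\infty$-structure on $A^\su=A\oplus\kk1^\su$ across the single free generator $j$ with $m_1^+j=1^\su-i$; because $A\hookrightarrow A^+$ is a quasi-isomorphism with this explicit contraction, the higher maps $m_{n_1;\dots;n_k}$ should be produced by a sum-over-trees (homological-perturbation) formula built from $m_{1,0}$, $m_{0,1}$ and the contraction, which solves every $A_\infty$-relation at once while enforcing the support condition $(A\oplus\kk j)^{\tens n}m^+_n\subset A$.

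The subtle point I would watch most carefully is the simultaneous interaction of the three requirements in Fukaya's definition -- strict unitality for $1^\su$, the support condition (4), and the $A_\infty$-relations -- since the contraction must be compatible with all of them at each inductive stage. Equivalently, in the operadic formulation I must check that the chosen primitives $\psi(x)$ respect the identifications carving $A_\infty^{hu}$ out of $A_\infty^\su\langle i,j\rangle$ in the split exact sequence~\eqref{eq-0-Ahu-Asuij-k1ij-0}, rather than merely existing. Verifying this compatibility, and thereby the vanishing of the higher obstructions, is what I expect to be the real work.
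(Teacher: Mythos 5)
First, a point about the comparison you were asked for: the paper does not prove this statement at all --- it quotes it from Lyubashenko and Manzyuk \cite[Theorem~3.7]{LyuMan-unitalAinf} --- so there is no internal proof to measure your attempt against, and your proposal must stand on its own. On its own terms, your forward implication (homotopy unital $\Rightarrow$ unital) is correct and complete; it is exactly the observation recorded in the text that $m_{1;0}$ and $m_{0;1}$ are unit homotopies, $m_{1;0}\partial=1-(1\tens i)m_2$ and $m_{0;1}\partial=1-(i\tens1)m_2$.

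The converse --- which is the entire content of the theorem --- remains only a plan, and its central step has a genuine gap. Your obstruction-theoretic setup is fine, and you rightly note that the classes in $H^{5-n-2k}(\udg(A^{\tens n},A))$ have no formal reason to vanish. But the device you propose to kill them, a homological-perturbation formula attached to the contraction of $A^+$ onto $A$, cannot work as stated. Transfer along that contraction goes the wrong way: it produces structures on the retract $A$ from structures on the ambient $A^+$, whereas you need to extend a structure from $A$ to $A^+$ subject to \emph{strict} constraints. The naive extension the contraction does provide fails at exactly the crucial point: taking the evident retraction $p:A^+\to A$ ($a\mapsto a$, $1^\su\mapsto i$, $j\mapsto0$) and $e:A\hookrightarrow A^+$, the operations $m_n^+=p^{\tens n}\cdot m_n\cdot e$, $n\ge2$, do define an $A_\infty$-structure on $A^+$ satisfying Fukaya's conditions (2), (3) and (4), but then $1^\su$ acts through $(i\tens1)m_2$ and $(1\tens i)m_2$, which are only \emph{homotopic} to the identity, so strict unitality (1) fails. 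Restoring (1) while preserving (3) and (4) is not a side condition one can enforce afterwards: by the very identification of $A_\infty^{hu}$-algebras with Fukaya's homotopy unital structures that this paper establishes, it is literally the system of equations $(m_{n_1;\dots;n_k})\partial=(\text{prescribed cycle})$ on which your induction was stuck, so passing to the $A^+$-model is circular rather than a reduction. The actual proof in \cite{LyuMan-unitalAinf} is a nontrivial induction in which unitality is invoked at every stage --- constructing auxiliary homotopies and proving acyclicity of inductively constructed cones, the technique this paper alludes to right after the definition of unitality --- and not merely at the base case $m_{1;0}$, $m_{0;1}$. You have located the difficulty accurately, but locating it is not the same as resolving it.
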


\section{Operad bimodules}
Bimodules over operads are of much interest because morphisms of algebras over a $\dg$\n-operad come from such a bimodule. 
An \emph{operad bimodule} is defined as a triple \((\ca;\cp;\cb)\), consisting of operads $\ca$, $\cb$ (monoids in $\Col\dg$) and an $\ca$-$\cb$-bimodule $\cp$ in the sense of monoidal structure of $\Col\dg$. 
Thus, the left and the right actions \(\lambda:\ca\odot\cp\to\cp\) and 
\(\rho:\cp\odot\cb\to\cp\) are associative and commute with each other in the sense of monoidal category $\Col\dg$. 
Notice that the actions
\begin{align*}
\lambda_{n_1,\dots,n_k}:\ca(n_1)\tdt\ca(n_k)\tens\cp(k) &\to\cp(n_1+\dots+n_k),
\\
\rho_{n_1,\dots,n_k}:\cp(n_1)\tdt\cp(n_k)\tens\cb(k) &\to \cp(n_1+\dots+n_k)
\end{align*}
include, in particular, chain maps
\begin{align*}
\lambda_\emptyset =\id: \cp(0) =\kk\tens\cp(0) &\to \cp(0), \text{ for } k=0,
\\
\lambda_0: \ca(0)\tens\cp(1) &\to \cp(0), \text{ for } k=1,\; n_1=0,
\\
\rho_\emptyset: \cb(0) =\kk\tens\cb(0) &\to \cp(0), \text{ for } k=0.
\end{align*}
The category of operad bimodules $\nOp1_1$ has morphisms
\((f;h;g):(\ca;\cp;\cb)\to(\cc;\cq;\cd)\), where \(f:\ca\to\cc\), 
\(g:\cb\to\cd\) are morphisms of $\dg$\n-operads and \(h:\cp\to\sS{_f}\cq_g\) is an $\ca$-$\cb$-bimodule morphism. 
The forgetful functor $U$ from $\ca$-$\cb$-bimodules to $\dg$\n-collections has a left adjoint \(F':\Col\dg\rightleftarrows\ca\imodi\cb:U\). 
Namely, the free $\ca$-$\cb$-bimodule generated by a $\dg$\n-collection $\cx$ is \(F'\cx=\ca\odot\cx\odot\cb\). 
The left and the right actions are \(\lambda=\mu_\ca\odot1_\cx\odot1_\cb\) and \(\rho=1_\ca\odot1_\cx\odot\mu_\cb\).

\begin{definition}
A \emph{distinguished floor} of a tree $t$ is a subset 
\(f(t)\subset\IV(t)=V(t)-\Inp(t)\) such that:
\begin{enumerate}
\renewcommand{\labelenumi}{(\arabic{enumi})}
\item if \(u,v\in f(t)\) and $u\preccurlyeq v$ ($v$ lies on the simple path connecting $u$ with the root), then $u=v$;

\item for any input \(u\in\Inp(t)\) there is \(v\in f(t)\) such that $u\preccurlyeq v$.
\end{enumerate}
\end{definition}

Objects of the category \(\dg^{3\NN}\), \(3\NN=\NN\sqcup\NN\sqcup\NN\), are written as triples of collections of complexes 
\((\cu;\cx;\cw)=(\cu(n);\cx(n);\cw(n))_{n\in\NN}\). 
There is a pair of adjoint functors \(T:\dg^{3\NN}\rightleftarrows\nOp1_1:U\), \(T(\cu;\cx;\cw)=(T\cu;T\cu\odot\cx\odot T\cw;T\cw)\). 
The bimodule part \(B(\cu;\cx;\cw)=T\cu\odot\cx\odot T\cw\) consists of summands indexed by trees $t$ with a distinguished floor $f(t)$ describing \(\cx(n_1)\tdt\cx(n_k)\), \(k=|f(t)|\ge0\), vertices below it labelled by $\cw(\text-)$ and vertices above it labelled by $\cu(\text-)$. 
The bimodule decomposes as
\[ B(\cu;\cx;\cw)(n) =\bigoplus_{(t,\Inp(t))}^{|\Inp(t)|=n} B(\cu;\cx;\cw;t),
\]
summation runs over isomorphism classes of all ordered rooted trees with $n$ inputs and a distinguished floor. Here
\[ B(\cu;\cx;\cw;t) \simeq \tens^{v\in(\IV(t),\le)} c(v)(|v|)
\]
with $c(v)=\cu$, $\cx$ or $\cw$ depending on the position of $v$ -- above, at or below the distinguished floor of $t$.

\begin{proposition}
Define weak equivalences (resp. fibrations) in $\nOp1_1$ as morphisms $f$ of $\nOp1_1$ such that $Uf$ is a quasi-isomorphism (resp. an epimorphism). 
These classes make $\nOp1_1$ into a model category.
\end{proposition}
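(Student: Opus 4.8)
The plan is to verify the hypotheses of Hinich's \thmref{thm-Hinich-model} for $S=3\NN$ and the adjunction \(T:\dg^{3\NN}\rightleftarrows\nOp1_1:U\) recalled above, imitating step by step the proof of \propref{pro-Operads-model-category}. First I would record that $\nOp1_1$ is complete, limits of triples being formed componentwise as limits of monoids and of bimodules, and cocomplete. What is actually needed is a description of the coproduct of a free object with an arbitrary triple: $T(\cu;\cx;\cw)\sqcup(\ca;\cp;\cb)$ is the colimit of a diagram of trees equipped with a distinguished floor, whose internal vertices are coloured above the floor by $\cu$ or $\ca$, on the floor by $\cx$ or $\cp$, and below the floor by $\cw$ or $\cb$, with contraction compositions and insertions of units exactly as in the operad case. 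As in \eqref{eq-TNO=C(NOt)}, this coproduct decomposes as a direct sum $\bigoplus_t C$ over terminal coloured trees with a distinguished floor, each summand $C$ being a finite tensor product $\tens^{v\in(\IV(t),\le)}c(v)(|v|)$ over the internal vertices. Since finite tensor products and direct sums commute with filtering colimits, the forgetful functor $U$ preserves them.

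The one substantial point is the verification that \(U(\inj_2):U(\ca;\cp;\cb)\to U\bigl(T(\KK[-p]_x)\sqcup(\ca;\cp;\cb)\bigr)\) is a quasi-isomorphism for every $x\in3\NN$ and $p\in\ZZ$. The collection $\KK[-p]_x$ is contractible, so it carries a degree $-1$ map $h$ with $dh+hd=1$, extended by zero to $h'=h\oplus0$ on the generator's slot together with the $\ca$-, $\cp$-, or $\cb$-colour. Repeating the argument of \propref{pro-Operads-model-category} verbatim, I would produce the retraction \(\beta:T(\KK[-p]_x)\sqcup(\ca;\cp;\cb)\to(\ca;\cp;\cb)\), identity on $(\ca;\cp;\cb)$ and zero on the generator, so that $\inj_2\cdot\beta=\id_{(\ca;\cp;\cb)}$, and then homotope $g=\beta\cdot\inj_2$ to the identity $f$ of the coproduct. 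Both $f$ and $g$ act factorwise on each summand $\tens^{v\in(\IV(t),\le)}c(v)(|v|)$ indexed by a terminal coloured tree $t$ with distinguished floor, by $1$ on the old colours and by $1$, respectively $0$, on the generator colour, so the spread-out endomorphism
\[
\hat h=\sum_{v\in(\IV(t),\le)} f\tdt f\tens h'\tens g\tdt g,
\]
with $h'$ inserted on the place indexed by $v$, satisfies $d\hat h+\hat h d=f-g$ by the same telescoping cancellation. Thus $\inj_2$ is a homotopy equivalence in $\Col\dg$, and in particular $U(\inj_2)$ is a quasi-isomorphism. With cocompleteness and completeness, preservation of filtering colimits by $U$, and this quasi-isomorphism property in hand, \thmref{thm-Hinich-model} delivers the asserted model structure on $\nOp1_1$.

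The hard part will be the explicit identification of \(T(\KK[-p]_x)\sqcup(\ca;\cp;\cb)\) as a sum over coloured trees with a distinguished floor, handled uniformly across the three possible slots of the generator. When $\KK[-p]_x$ sits in the bimodule slot the coproduct collapses to the bimodule direct sum $\cp\oplus(\ca\odot\KK[-p]_x\odot\cb)$, whose second summand is contractible via $h$, so $\inj_2$ is visibly a homotopy equivalence; when the generator sits in an operad slot one must track the induced change of the acting operad and the resulting extension of $\cp$. Once the colouring and the floor are fixed, however, the contracting homotopy $\hat h$ and its telescoping behaviour are identical to the operad case, so I expect no genuinely new phenomena beyond this bookkeeping.
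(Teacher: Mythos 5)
Your overall strategy is exactly the paper's: verify Hinich's hypotheses for $S=3\NN$ via the adjunction $T:\dg^{3\NN}\rightleftarrows\nOp1_1:U$, describe the coproduct with a free object as a colimit of coloured trees with a distinguished floor, decompose it into a direct sum over terminal trees, and transport the telescoping homotopy $\hat h$ from the proof of \propref{pro-Operads-model-category}. Your second paragraph carries this out correctly and uniformly for all three slots of the generator, and up to that point the argument matches the paper's.

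Your final paragraph, however, contains a genuine error: when $\KK[-p]_x$ sits in the bimodule slot, the coproduct does \emph{not} collapse to the direct sum $\cp\oplus(\ca\odot\KK[-p]_x\odot\cb)$. The right action of $\cb$ involves tensor powers of the bimodule argument, $\cp(n_1)\tdt\cp(n_k)\tens\cb(k)$, so the functor $-\odot\cb$ is not additive, and coproducts in the category of $\ca$-$\cb$-bimodules behave like free products of algebras rather than direct sums. Concretely, writing $\cn=\KK[-p]_x$, the coproduct contains mixed elements such as $\rho(p\tens n\tens b)$ with $p\in\cp$, $n$ coming from the free part and $b\in\cb(2)$, which lie in neither of your two summands; in the tree picture these are terminal trees whose distinguished floor carries both $\cp$-coloured and $\cn$-coloured vertices. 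This is precisely why the paper treats this case as the colimit quotient $\cR$ of $\ca\odot(\cn\oplus\cp)\odot\cb$ and characterises its terminal trees, rather than invoking a direct sum. The conclusion you want is still true, and your own uniform argument already proves it: every terminal tree other than the one returning $\cp$ itself contains at least one $\cn$-coloured vertex, so its summand is killed by the telescoping homotopy $\hat h$. So the fix is simply to delete the ``visibly a homotopy equivalence'' shortcut and run the bimodule-slot case through the same terminal-tree decomposition as the other two; as written, that step is false even though the argument surrounding it is sound.
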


\begin{proof}
Let us verify that the hypotheses of Hinich's \thmref{thm-Hinich-model} are satisfied for $S=3\NN=\NN\sqcup\NN\sqcup\NN$. 
The category $\nOp1_1$ of $\dg$\n-operad bimodules is cocomplete. 
In particular, the bimodule part $\cR$ of the coproduct of bimodules
\((\ca;\cp;\cb)\sqcup(\cc;\cq;\cd)=(\ca\sqcup\cc;\cR;\cb\sqcup\cd)\) is a certain quotient of the free bimodule 
\(B(\ca\oplus\cc;\cp\oplus\cq;\cb\oplus\cd)\) -- colimit of a diagram, consisting of contraction compositions or actions and of insertions of units of operads.

Taking $x$ from the first, the second or the third copy of $\NN$ in $S$ and denoting \(\cn=\KK[-p]_x\), we represent the morphism 
\(\inj_2:(\ca;\cp;\cb)\to F(\KK[-p]_x)\sqcup(\ca;\cp;\cb)\) in the form
\begin{align}
(\inj_2,\inj,1) &: (\ca;\cp;\cb) \to (T\cn\sqcup\ca;\cq;\cb), \notag
\\
(1,\inj,1) &: (\ca;\cp;\cb) \to (\ca;\cR;\cb), \label{eq-1in1-APB-ARB}
\\
(1,\inj,\inj_2) &: (\ca;\cp;\cb) \to (\ca;\cs;T\cn\sqcup\cb), \notag
\end{align}
Here $\cq$ (resp. $\cR$ or $\cs$) is the quotient of the freely generated bimodule \((T\cn\sqcup\ca)\odot\cp\odot\cb\) (resp. \(\ca\odot(\cn\oplus\cp)\odot\cb\) or \(\ca\odot\cp\odot(T\cn\sqcup\cb)\)). 
Namely, it is a colimit of the diagram, formed by insertion of units of operads and by contraction actions. 
This diagram consists of connected components and each of them has a terminal object. 
The terminal trees $t$ are characterised by the properties:
\begin{enumerate}
\renewcommand{\labelenumi}{\roman{enumi})}
\item contraction actions involving $\ca$, $\cp$, $\cb$ can not start from $t$;
\item insertion of a unary vertex coloured with $\ca$ or $\cb$ breaks the requirement i).
\end{enumerate}
In the first (resp. the third) case the floors below (resp. above) the distinguished level of terminal tree, summand of $\cq$ (resp. $\cs$), are missing. 
In all three cases the bimodule part decomposes into a sum over terminal trees. 
Proceeding as in the proof of \propref{pro-Operads-model-category}, we construct for contractible $\cn$ homotopies $h$ and $\hat{h}$, which shows that maps $\alpha$ from \eqref{eq-1in1-APB-ARB} and maps $\beta$ in the opposite direction taking $\cn$ to 0 are homotopy inverse to each other.
\end{proof}

\begin{example}
Let $X$, $Y$ be complexes. Define a collection \(\HOM(X,Y)\) as
\(\HOM(X,Y)(n)=\udg(X^{\tens n},Y)\). 
Substitution composition \(\HOM(X,Y)\odot\HOM(Y,Z)\to\HOM(X,Z)\) and obvious units \(\1\to\HOM(X,X)\) make the category of complexes enriched in the category $\Col\dg$. 
In particular, \(\END X=\HOM(X,X)\) are algebras in $\Col\dg$ 
($\dg$\n-operads). 
The collection \(\HOM(X,Y)\) is an \(\END X\)-\(\END Y\)-bimodule. 
The multiplication and the actions are induced by the substitution composition. 
Notice that $\rho_\emptyset:\END Y(0)=\udg(\kk,Y)=\HOM(X,Y)(0)$ is the identity map and \(\END Y(0)\simeq Y\).
\end{example}

\begin{definition}
An \emph{algebra map} over a $\dg$\n-operad bimodule \((\ca;\cp;\cb)\) is a pair of complexes $A$, $B$ together with a morphism of operad bimodules
\[ (\ca;\cp;\cb) \to (\END A;\HOM(A,B);\END B).
\]
\end{definition}

In particular, the complexes $A$ and $B$ acquire an $\ca$\n-algebra (resp. $\cb$\n-algebra) structure from the maps \(\ca\to\END A\), \(\cb\to\END B\). 

\textbf{Notation.} We use the shorthand \((\co,\cp)\) for an operad
$\co$\n-bimodule \((\co;\cp;\co)\).

\begin{example}
Consider the operad bimodules \((\As,\As)\), \((\Ass,\Ass)\), where the first term is an operad and the second term is a regular bimodule. 
A morphism of operad bimodules
\[ (\As;\As;\As) \to (\END A;\HOM(A,B);\END B)
\]
amounts to a morphism \(f:A\to B\) of associative differential graded
$\kk$\n-algebras without units. 
The image of the element \(1^F=1\in\As(1)=\kk\) of the bimodule is denoted \(f\in\HOM(A,B)(1)=\udg(A,B)\). 
Being a cycle it is a chain map. 
Action on the regular bimodule implies that $f$ preserves multiplication.

Similarly, a morphism of operad bimodules
\[ (\Ass;\Ass;\Ass) \to (\END A;\HOM(A,B);\END B)
\]
amounts to a morphism \(f:A\to B\) of associative differential graded
$\kk$\n-algebras with units. 
In fact, elements \(1^\su=1\in\Ass(0)=\kk\) of the operad and \(1^\su\rho_\emptyset=1\in\Ass(0)=\kk\) of the bimodule are related by the equation \(1^\su\cdot1^F=\lambda_0(1^\su\tens1^F)=1^\su\rho_\emptyset\).
Coherence with $\rho_\emptyset$
\begin{diagram}[LaTeXeqno]
1^\su\in\Ass(0) &\rTTo^{\rho_\emptyset}_\id &\Ass(0)\ni1^\su\rho_\emptyset
\\
\dTTo &&\dTTo
\\
1_B\in\END B(0) &\rTTo^{\rho_\emptyset}_\id &\HOM(A,B)(0)\ni1_B
\label{dia-As1-As1-EndB-Hom}
\end{diagram}
implies that $1^\su\rho_\emptyset$ is represented by the unit $1_B$. 
Hence the mentioned equation gives \(1_A\cdot f=1_B\) and $f$ is unital.
\end{example}

\subsection{\texorpdfstring{$A_\infty$}{A8}-morphisms.}
Cofibrant replacement of a $\dg$\n-operad bimodule $(\co,\cp)$ is a trivial fibration \((\ca,\cf)\to(\co,\cp)\) such that the only map from the initial bimodule \((\1,0)\to(\ca,\cf)\) is a cofibration in $\nOp1_1$.

\begin{proposition}
There is a cofibrant replacement \((A_\infty,F_1)\) of the operad bimodule $(\As,\As)$, where the bimodule $F_1$ over the operad $A_\infty$ is freely generated by $n$\n-ary elements $\sff_n$ of degree $1-n$, $n\ge1$, thus,  \(F_1=A_\infty\odot\kk\{\sff_n\mid n\ge1\}\odot A_\infty\). 
The differential is given by
\begin{gather}
\sff_k\partial =\sum_{r+n+t=k}^{n>1} 
(-1)^{t+rn}(1^{\tens r}\tens m_n\tens1^{\tens t}) \sff_{r+1+t}
-\sum^{l>1}_{i_1+\dots+i_l=k}
(-1)^\sigma (\sff_{i_1}\tens \sff_{i_2}\tdt \sff_{i_l}) m_l, \notag
\\
\sigma =(i_2-1) +2(i_3-1) +\dots +(l-2)(i_{l-1}-1) +(l-1)(i_l-1).
\label{eq-sigma-sign}
\end{gather}
Moreover, \((A_\infty,F_1)\to(\As,\As)\) is a homotopy isomorphism in \(\dg^{\NN\sqcup\NN}\).
\end{proposition}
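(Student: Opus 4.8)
The plan is to verify the three conditions defining a cofibrant replacement of $(\As,\As)$ in $\nOp1_1$: that the projection $p\colon(A_\infty,F_1)\to(\As,\As)$ is a fibration, that it is a weak equivalence, and that $(\1,0)\to(A_\infty,F_1)$ is a cofibration. I would first check that \eqref{eq-sigma-sign}, together with the operad differential of $A_\infty$, squares to zero on the generators $\sff_k$; this is a sign-bookkeeping exercise in which the internal terms produced by the first summand reproduce the known relation for $m_n\partial$, the terms from the product summand encode associativity, and the two mixed contributions cancel. The fibration property is immediate, since $Up$ is degreewise surjective: on the operad part $m_2\mapsto m$ already hits $m^{(n)}$, and on the bimodule part the tree with binary multiplications below the floor and unary $\sff_1$'s on it maps onto the generator $m^{(n)}\in\As(n)$.

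For the cofibration I would argue exactly as for $A_\infty^{hu}$. As a graded operad bimodule $(A_\infty,F_1)=T\bigl(\kk\{m_n\mid n\ge2\};\kk\{\sff_n\mid n\ge1\};\kk\{m_n\mid n\ge2\}\bigr)$ is free, so $(\1,0)\to(A_\infty,F_1)$ is the map out of the initial object into a free object. I present it as the colimit of a sequence of elementary standard cofibrations $(\1,0)=\cd_{-1}\to\cd_0\to\cd_1\to\cdots$, where $\cd_r$ is freely generated by those $m_n$ and $\sff_n$ of degree $\ge-r$. Because $\partial$ raises degree by one while every generator has nonpositive degree, the boundary of a generator of degree $-r$ is a sum of trees none of whose vertices can carry a generator of degree below $-r+1$; hence $\partial$ maps the new generators into $U\cd_{r-1}$ and each $\cd_{r-1}\to\cd_r$ is an elementary standard cofibration, so the colimit is a standard cofibration.

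The substance of the statement is that $p$ is a weak equivalence, i.e.\ that $F_1(n)\to\As(n)$ is a quasi-isomorphism for every $n\ge1$, the operad half $A_\infty\to\As$ being already known. I would filter $F_1$ by the number $s$ of $\sff$-labelled floor vertices. The operad differentials on the $m$-trees above and below the floor, and the first summand of \eqref{eq-sigma-sign}, preserve $s$, whereas the product summand $(\sff_{i_1}\tens\dots\tens\sff_{i_l})m_l$ with $l>1$ strictly raises it; so the associated graded differential $d_0$ consists only of the first two pieces. Under $d_0$ the tree below the floor contributes the complex $A_\infty(s)$, quasi-isomorphic to $\As(s)$ by the operad-level result, while the part above each floor vertex is a one-sided bar-type complex in which $A_\infty$ acts on $\sff$; a Künneth argument then identifies the $E_1$-page with the analogous object built over the strictly associative operad $\As$. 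On $E_1$ the surviving differential is induced by the product summand, now with every $m_l$ replaced by the binary $m$, which is precisely the reduced two-sided bar differential resolving the regular $\As$-bimodule, so its homology is $\As(n)$ concentrated in degree $0$. Since for fixed $n$ the complex $F_1(n)$ is bounded and finitely generated free over $\kk$, the spectral sequence converges and the resulting quasi-isomorphism is automatically a homotopy equivalence, yielding the asserted homotopy isomorphism in $\dg^{\NN\sqcup\NN}$.

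I expect the main obstacle to be the precise identification of the $E_1$-differential with the associative bar differential, together with the Koszul-sign bookkeeping needed to make the two nested reductions, of the bottom operad and of the top modules, compatible. A more geometric alternative, mirroring the use of associahedra for $A_\infty\to\As$, would be to recognise $F_1(n)$ with its differential as the cellular chain complex of Stasheff's multiplihedron $J_n$; convexity of $J_n$ makes it contractible and pins down $H_\bullet(F_1(n))=\As(n)$ directly, with $p$ corresponding to the augmentation.
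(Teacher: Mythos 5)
The peripheral steps of your proposal are sound: the check $\partial^2=0$, the surjectivity giving the fibration property, the presentation of $(\1,0)\to(A_\infty,F_1)$ as a colimit of elementary standard cofibrations $\cd_{r-1}\to\cd_r$ (exactly the argument the paper uses for $(A_\infty^{hu},F_1^{hu})$), and the final upgrade from quasi-isomorphism to homotopy equivalence, since all complexes involved are bounded above complexes of free $\kk$-modules. The genuine gap is in your computation of the $E_1$-page, i.e.\ in the heart of the proof. You correctly place the insertion summand $(1^{\tens r}\tens m_n\tens1^{\tens t})\sff_{r+1+t}$ into $d_0$, but your identification of $E_1$ then tacitly drops it: you take homology of the above-floor part only with respect to the internal $A_\infty$-differentials. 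If the insertion summand is kept in $d_0$ (as it must be for the floor-count filtration), the complex above a floor vertex with $k$ leaves, namely $(A_\infty\odot\kk\{\sff_j\mid j\ge1\})(k)$ with differential equal to the internal $A_\infty$-differential plus insertion, is \emph{acyclic} for every $k\ge2$ --- already for $k=2$ it is $\kk\sff_2\xrightarrow{\ \pm1\ }\kk\,(m_2)\sff_1$ --- so $E_1(n)$ is just $\As(n)$, concentrated in degree $0$ in the single component $s=n$, $k_1=\dots=k_n=1$. It is \emph{not} ``the analogous object built over $\As$'', and there is no surviving two-sided bar differential to analyse. Moreover the acyclicity that this collapse requires is itself a nontrivial fact needing proof, e.g.\ by a second filtration by the arity of the root $\sff$-vertex, which reduces it to $(\kk\xrightarrow{1}\kk)^{\tens(k-1)}$; as written, your argument is internally inconsistent and omits this computation.

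Under the reading closest to your intention --- first replace $A_\infty$ by $\As$ everywhere, then resolve --- what you need is that $\overline{F}_1=\As\odot\kk\{\sff_n\mid n\ge1\}\odot\As$, with differential $\sff_k\partial=\sum\pm(1^{\tens r}\tens m\tens1^{\tens t})\sff_{k-1}+\sum\pm(\sff_i\tens\sff_j)m$, has homology $\As$ concentrated in degree $0$. This is exactly the crux, and it cannot be discharged by calling it ``precisely the reduced two-sided bar differential resolving the regular $\As$-bimodule'': the middle collection has a single generator in each arity and sits on one level, so this complex is not the (normalized) simplicial two-sided bar construction, and no unit-induced extra degeneracy applies verbatim. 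This point is what the paper's proof is actually about: it factors the map as $(A_\infty,F_1)\to(\As,\overline{F}_1)\to(\As,\As)$ and proves the second arrow to be a homotopy equivalence in the $\dg$-category of left $\As$-modules by exhibiting the section $\beta$, an explicit homotopy $h$, and a locally nilpotent operator $N$ with $p\beta=h\partial+\partial h+1-N$, the local nilpotency being verified generator by generator in a separate lemma. To close your gap you must supply an argument of this strength: the paper's homotopy, or the root-arity filtration above, or the observation that $\overline{F}_1(n)$ splits as an $(n-1)$-fold tensor power of the complex $\kk\to\kk\oplus\kk$ (one factor per gap between inputs, with states ``no cut / weak cut / strong cut''). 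Likewise, the multiplihedron alternative is viable only once $F_1(n)$, with these signs, is identified with the cellular chain complex of $J_n$ --- a fact you assert but do not prove.
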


\begin{proof}
It is well known that so defined differential $\partial$ for $F_1$ satisfies $\partial^2=0$. 
Generate a free $\As$\n-bimodule $\overline{F}_1$ by $n$\n-ary elements $\sff_n$ of degree $1-n$, thus, 
\(\overline{F}_1=\As\odot\kk\{\sff_n\mid n\ge1\}\odot\As\). 
Actually, \((\As,\overline{F}_1)\) is the coequalizer in $\nOp1_1$ of the pair of morphisms of collections
\[ 0,\inj: (\kk\{(m_2\tens1)m_2-(1\tens m_2)m_2,m_n\mid n\ge3\},0)
\rightrightarrows (A_\infty,F_1),
\]
the second arrow is just the embedding. 
Therefore, the differential in $\overline{F}_1$ reduces to
\begin{gather*}
\sff_k\partial 
=\sum_{r+2+t=k} (-1)^t(1^{\tens r}\tens m\tens1^{\tens t}) \sff_{k-1}
+\sum_{i+j=k} (-1)^j(\sff_i\tens \sff_j)m,
\end{gather*}
and the equation $\partial^2=0$ follows. 
The operad bimodule morphism in question decomposes as
\[ (A_\infty,F_1) \rFib^{htis} (\As,\overline{F}_1) \rFib (\As,\As).
\]
The first epimorphism is a homotopy isomorphism, since $A_\infty\to\As$ is. 
Let us prove the same property for the second epimorphism, that is, the $\As$\n-bimodule epimorphism $p:\overline{F}_1\to\As$ is a homotopy isomorphism in $\dg^\NN$. 
The following is straightforward:

\begin{claim}
For an arbitrary $\dg$\n-operad $\co$ there is a $\dg$\n-category \(\co\modul\), whose objects are left $\co$\n-modules in $\Col\dg$ and degree $t$ morphisms \(f:\cP\to\cq\) are collections of $\kk$\n-linear maps 
\(f(n):\cP(n)\to\cq(n)\) of degree $t$ such that
\begin{diagram}
\co(n_1)\tdt\co(n_k)\tens\cp(k) &\rTTo^\lambda &\cp(n_1+\dots+n_k)
\\
\dTTo<{1\tdt1\tens f} &= &\dTTo>f
\\
\co(n_1)\tdt\co(n_k)\tens\cq(k) &\rTTo^\lambda &\cq(n_1+\dots+n_k)
\end{diagram}
The differential is \(f\mapsto[f,\partial]=f\partial-(-1)^f\partial f\).
\end{claim}

We prove more: the zero degree cycle $p:\overline{F}_1\to\As$ is homotopy invertible in the $\dg$\n-category \(\As\modul\). 
In fact, both left $\As$\n-modules are freely generated: $\overline{F}_1$ by \((\sff_{i_1}\tdt \sff_{i_k})m^{(k)}\), $k\ge1$, and $\As$ by $1^F\in\As(1)$. 
The $\As$\n-module map $p$ is given by
\[ (\sff_{i_1}\tdt \sff_{i_k})m^{(k)}.p =
\begin{cases}
m^{(k)}1^F, &\qquad \text{if } i_1 =\dots =i_k =1,
\\
0, &\qquad \text{otherwise}.
\end{cases}
\]
There is a zero degree cycle \(\beta:\As\to\overline{F}_1\), $1^F\mapsto\sff_1$, in \(\As\modul\). 
Clearly, \(\beta p=1_{\As}\). 
Let us prove that \(p\beta\) is homotopy invertible. 
These two statements would imply that $p$ is homotopy invertible in \(\As\modul\) and $\beta$ is its homotopy inverse.

Define an $\As$\n-module map $h:\overline{F}_1\to\overline{F}_1$ of degree $-1$ by
\[ (\sff_{i_1}\tdt \sff_{i_{k-1}}\tens \sff_{i_k})m^{(k)}.h =
\begin{cases}
(\sff_{i_1}\tdt \sff_{i_{k-2}}\tens \sff_{i_{k-1}+1})m^{(k-1)},
&\quad \text{if } k>1,\, i_k=1,
\\
0, &\quad \text{otherwise}.
\end{cases}
\]
There is a unique zero degree cycle $N$ such that
\[ p\beta =h\partial +\partial h +1-N: \overline{F}_1 \to \overline{F}_1.
\]
Clearly, \(N=1-p\beta+h\partial+\partial h:\overline{F}_1\to\overline{F}_1\). 
It suffices to prove that $N$ is locally nilpotent. 
Namely, if $\overline{F}_1^{(q)}$ is an $\As$\n-submodule generated by \((\sff_{i_1}\tdt \sff_{i_k})m^{(k)}\), $k\le q$, $\overline{F}_1^{(0)}=0$, then \(\overline{F}_1^{(q)}.N\subset\overline{F}_1^{(q-1)}\). 
Therefore, $1-N$ is invertible with the (well--defined) inverse \(\sum_{a=0}^\infty N^a\) and $p\beta$ is homotopy invertible.

The fact that $N$ reduces the filtration by 1 is verified by the explicit formulae:

\begin{lemma}
The $\As$\n-module map $N$ is given by $\sff_n.N=0$ for $n\ge1$, and for $k>1$
\begin{gather*}
(\sff_{i_1}\tdt \sff_{i_k})m^{(k)}.N =0, \qquad \text{if } i_k>2,
\\
(\sff_{i_1}\tdt \sff_{i_{k-1}}\tens \sff_2)m^{(k)}.N
=(1^{\tens(i_1+\dots+i_{k-1})}\tens m)
(\sff_{i_1}\tdt\sff_{i_{k-1}+1})m^{(k-1)},
\\
(\sff_{i_1}\tdt \sff_{i_{k-1}}\tens \sff_1)m^{(k)}.N
=(1^{\tens(i_1+\dots+i_{k-1}-1)}\tens m)
(\sff_{i_1}\tdt\sff_{i_{k-1}})m^{(k-1)},
\text{ if } i_{k-1}>1,
\\
\begin{split}
(\sff_{i_1}\tdt \sff_{i_{k-2}}\tens \sff_1\tens \sff_1)m^{(k)}.N
=(1^{\tens(i_1+\dots+i_{k-2})}\tens m)
&(\sff_{i_1}\tdt \sff_{i_{k-2}}\tens \sff_1)m^{(k-1)},
\\
&\text{ if } (i_1,\dots,i_{k-2})\ne(1,\dots,1),
\end{split}
\\
\sff_1^{\tens k}m^{(k)}.N
=(1^{\tens(k-2)}\tens m) \sff_1^{\tens(k-1)} m^{(k-1)} -m^{(k)}\sff_1.
\end{gather*}
\end{lemma}

\begin{proof}
Clearly, $\sff_1.N=0$ and for $n\ge2$
\[ \sff_n.N =\sff_n+\sum_{i+j=n}(-1)^j(\sff_i\tens\sff_j)m.h =\sff_n-\sff_n =0.
\]
For $k>1$ and $i_k>2$ we get
\begin{align*}
(\sff_{i_1}\tdt \sff_{i_k})m^{(k)}.N &=(\sff_{i_1}\tdt \sff_{i_k})m^{(k)}
+(\sff_{i_1}\tdt \sff_{i_{k-1}}\tens \sff_{i_k}.\partial)m^{(k)}.h
\\
&=(\sff_{i_1}\tdt \sff_{i_k})m^{(k)}
-(\sff_{i_1}\tdt \sff_{i_{k-1}}\tens \sff_{i_k-1}\tens \sff_1)m^{(k+1)}.h
\\
&=(\sff_{i_1}\tdt \sff_{i_k})m^{(k)} -(\sff_{i_1}\tdt \sff_{i_k})m^{(k)} =0,
\end{align*}
\begin{multline*}
(\sff_{i_1}\tdt \sff_{i_{k-1}}\tens \sff_2)m^{(k)}.N
=(\sff_{i_1}\tdt \sff_{i_{k-1}}\tens \sff_2)m^{(k)}
+(\sff_{i_1}\tdt \sff_{i_{k-1}}\tens \sff_2.\partial)m^{(k)}.h
\\
=(\sff_{i_1}\tdt \sff_{i_{k-1}}\tens \sff_2)m^{(k)}
+\bigl(\sff_{i_1}\tdt\sff_{i_{k-2}}\tens(1^{\tens(i_{k-1}-1)}\tens m) 
\sff_{i_{k-1}+1}\bigr)m^{(k-1)}
-(\sff_{i_1}\tdt \sff_{i_{k-1}}\tens \sff_2)m^{(k)}
\\
=(1^{\tens(i_1+\dots+i_{k-1})}\tens m)
(\sff_{i_1}\tdt \sff_{i_{k-2}}\tens \sff_{i_{k-1}+1})m^{(k-1)}.
\end{multline*}
For $k>1$ and $i_{k-1}>1$ we get
\begin{multline*}
(\sff_{i_1}\tdt \sff_{i_{k-1}}\tens \sff_1)m^{(k)}.N
\\
=(\sff_{i_1}\tdt \sff_{i_{k-1}}\tens \sff_1)m^{(k)}
+(\sff_{i_1}\tdt \sff_{i_{k-1}+1})m^{(k-1)}.\partial
+(\sff_{i_1}\tdt \sff_{i_{k-1}}\tens \sff_1)m^{(k)}.\partial h
\\
\begin{aligned}
=(\sff_{i_1}\tdt \sff_{i_{k-1}}\tens \sff_1)m^{(k)}
&+(\sff_{i_1}\tdt \sff_{i_{k-2}}\tens \sff_{i_{k-1}+1}.\partial)m^{(k-1)}
\\
&+(\sff_{i_1}\tdt\sff_{i_{k-2}}\tens\sff_{i_{k-1}}.\partial\tens\sff_1)
m^{(k)}.h
\end{aligned}
\\
=(\sff_{i_1}\tdt \sff_{i_{k-1}}\tens \sff_1)m^{(k)}
+\sum_{r+t=i_{k-1}-1} (-1)^t
(1^{\tens(i_1+\dots+i_{k-2}+r)}\tens m\tens1^{\tens t})
(\sff_{i_1}\tdt \sff_{i_{k-1}})m^{(k-1)}
\\
\begin{aligned}
&+\sum_{l+j=i_{k-1}+1}
(-1)^j(\sff_{i_1}\tdt \sff_{i_{k-2}}\tens \sff_l\tens \sff_j)m^{(k)}
\\
&+\sum_{r+2+t=i_{k-1}}(-1)^t
(1^{\tens(i_1+\dots+i_{k-2}+r)}\tens m\tens1^{\tens(t+1)})
(\sff_{i_1}\tdt \sff_{i_{k-1}})m^{(k-1)}
\\
&+\sum_{l+j=i_{k-1}}^{j>0}
(-1)^j(\sff_{i_1}\tdt \sff_{i_{k-2}}\tens \sff_l\tens \sff_{j+1})m^{(k)}
\end{aligned}
\\
=(1^{\tens(i_1+\dots+i_{k-1}-1)}\tens m)
(\sff_{i_1}\tdt\sff_{i_{k-1}})m^{(k-1)}.
\end{multline*}
If $k>2$ and \((i_1,\dots,i_{k-2})\ne(1,\dots,1)\), then
\begin{multline*}
(\sff_{i_1}\tdt \sff_{i_{k-2}}\tens \sff_1\tens \sff_1)m^{(k)}.N
\\
=(\sff_{i_1}\tdt \sff_{i_{k-2}}\tens \sff_1\tens \sff_1)m^{(k)}
+(\sff_{i_1}\tdt \sff_{i_{k-2}}\tens \sff_2)m^{(k-1)}.\partial
+(\sff_{i_1}\tdt \sff_{i_{k-2}}\tens \sff_1\tens \sff_1)m^{(k)}.\partial h
\\
=(\sff_{i_1}\tdt \sff_{i_{k-2}}\tens \sff_1\tens \sff_1)m^{(k)}
+(\sff_{i_1}\tdt \sff_{i_{k-2}}\tens \sff_2.\partial)m^{(k-1)}
\\
=(\sff_{i_1}\tdt \sff_{i_{k-2}}\tens \sff_1\tens \sff_1)m^{(k)}
+(\sff_{i_1}\tdt \sff_{i_{k-2}}\tens m\sff_1)m^{(k-1)}
-(\sff_{i_1}\tdt \sff_{i_{k-2}}\tens \sff_1\tens \sff_1)m^{(k)}
\\
=(1^{\tens(i_1+\dots+i_{k-2})}\tens m)
(\sff_{i_1}\tdt \sff_{i_{k-2}}\tens \sff_1)m^{(k-1)}.
\end{multline*}
For $k\ge2$ we have
\begin{multline*}
\sff_1^{\tens k}m^{(k)}.N =\sff_1^{\tens k}m^{(k)} -m^{(k)}\sff_1
+(\sff_1^{\tens(k-2)}\tens \sff_2)m^{(k-1)}.\partial
\\
=\sff_1^{\tens k}m^{(k)}-m^{(k)}\sff_1 +(\sff_1^{\tens(k-2)}\tens m\sff_1)m^{(k-1)}
-\sff_1^{\tens k}m^{(k)}
=(1^{\tens(k-2)}\tens m) \sff_1^{\tens(k-1)} m^{(k-1)} -m^{(k)}\sff_1.
\end{multline*}
These cases cover all generators of \(\overline{F}_1\).
\end{proof}

The proposition is proven.
\end{proof}

\begin{remark}\label{rem-A8-123-abc}
Recall that the operadic part $A_\infty$ of \((A_\infty,F_1)\) has a 
$\kk$\n-basis indexed by isomorphism classes of ordered rooted trees without unary vertices.
The bimodule part $F_1$ has a $\kk$\n-basis indexed by isomorphism classes of trees $t$, \(\Inp(t)=L(t)\), with a distinguished floor \(f(t)\subset\IV(t)\)  that satisfy the following requirements:
\begin{enumerate}
\renewcommand{\labelenumi}{(\arabic{enumi})}
\item the set \(\IV(t)-f(t)\) contains no unary vertices;

\item the set \(f(t)\) contains no nullary vertices.
\end{enumerate}
In this correspondence the following vertices are assigned to factors of an element of \(F_1=A_\infty\odot\kk\{\sff_n\mid n\ge1\}\odot A_\infty\):
\begin{enumerate}
\renewcommand{\labelenumi}{(\alph{enumi})}
\item an operation $m_n$ from the left factor $A_\infty$ is represented by an $n$\n-corolla \(u\in\IV(t)-f(t)\) such that there is \(v\in f(t)\) for which $u\preccurlyeq v$ ($u$ is above the distinguished floor);

\item an operation $\sff_n$ from the middle factor is represented by an 
$n$\n-corolla \(v\in f(t)\) ($v$ is on the distinguished floor);

\item an operation $m_n$ from the right factor $A_\infty$ is represented by an $n$\n-corolla \(w\in\IV(t)-f(t)\) such that there is \(v\in f(t)\) for which $v\preccurlyeq w$ ($w$ is below the distinguished floor).
\end{enumerate}
\end{remark}

\begin{example}
Algebra maps over the $\dg$\n-operad bimodule \((A_\infty,F_1)\) are precisely morphisms of \ainf-algebras in the usual sense: a pair $A$, $B$ of 
\ainf-algebras and $\kk$\n-linear maps \(\sff_n:A^{\tens n}\to B\), $n\ge1$, of degree \(\deg \sff_n=1-n\), such that
\begin{gather*}
\sum^{l>0}_{i_1+\dots+i_l=k} (-1)^\sigma 
(\sff_{i_1}\tens \sff_{i_2}\tdt \sff_{i_l}) m_l
=\sum_{r+n+t=k} (-1)^{t+rn}(1^{\tens r}\tens m_n\tens1^{\tens t}) \sff_{r+1+t},
\end{gather*}
where $\sigma$ is given by \eqref{eq-sigma-sign}.
\end{example}

\subsection{Unital \texorpdfstring{$A_\infty$}{A8}-morphisms.}
Strictly unital \ainf-morphisms are governed by the operad bimodule $(A_\infty^\su,F_1^\su)$ generated over \((A_\infty,F_1)\) by a cycle
$1^\su\in A_\infty^\su(0)^0$ subject to the following relations:
\begin{equation}
1^\su\rho_\emptyset =1^\su \sff_1, \quad
(1^{\tens a}\tens1^\su\tens1^{\tens b})\sff_{a+1+b} =0 \text{ \ if \ } a+b>0.
\label{eq-1surho-1suf1}
\end{equation}
The rows of the following diagram in \(\dg^{\NN\sqcup\NN}\)
\begin{diagram}[LaTeXeqno]
0 &\rTTo &(A_\infty,F_1) &\rTTo &(A_\infty^\su,F_1^\su) &\rTTo
&(\kk1^\su,\kk1^\su\rho_\emptyset) &\rTTo &0
\\
&& \dFib<{htis} &&\dFib<{htis}>{p'} &&\dEq
\\
0 &\rTTo &(\As,\As) &\rTTo &(\Ass,\Ass) &\rTTo 
&(\kk1^\su,\kk1^\su\rho_\emptyset) &\rTTo &0
\label{dia-(AF)-(AsuFsu)-(k1k1)}
\end{diagram}
are exact sequences, split in the obvious way. 
Therefore, the middle vertical arrow $p'$ is a homotopy isomorphism.

Just as before, we add to the operadic part of $(A_\infty^\su,F_1^\su)$ two nullary operations $i$, $j$, $\deg i=0$, $\deg j=-1$, with \(i\partial=0\), \(j\partial=1^\su-i\). 
The projection $p'$ decomposes into a standard trivial cofibration and an epimorphism $p''$
\[ p' =\bigl( (A_\infty^\su,F_1^\su) \rCof~\Sim^{htis}
(A_\infty^\su,F_1^\su)\langle i,j\rangle \rTTo^{p''} (\Ass,\Ass) \bigr),
\]
where \(p''(1^\su)=1^\su\), \(p''(i)=1^\su\), \(p''(m_2)=m_2\), \(p''(\sff_1)=1^F\), and other generators go to 0. 
As a corollary \(p''(1^\su\rho_\emptyset)=1^\su\rho_\emptyset\). 
Hence, the projection $p''$ is a homotopy isomorphism as well.

A cofibrant replacement \((A_\infty^{hu},F_1^{hu})\to(\Ass,\Ass)\) is constructed as a $\dg$-subbimodule of 
\((A_\infty^\su,F_1^\su)\langle i,j\rangle\) generated in operadic part by $i$ and $n$\n-ary operations of degree $4-n-2k$
\[ m_{n_1;n_2;\dots;n_k} =(1^{\tens n_1}\tens j\tens1^{\tens n_2}\tens
j\tdt1^{\tens n_{k-1}}\tens j\tens1^{\tens n_k})m_{n+k-1},
\]
where \(n=\sum_{q=1}^kn_q\), $k\ge1$, $n_q\ge0$, \(n+k\ge3\) and in bimodule part by the nullary element \(v=j\sff_1-j\rho_\emptyset\), \(\deg v=-1\), and $n$\n-ary elements of degree $3-n-2k$
\begin{equation}
\sff_{n_1;n_2;\dots;n_k} =(1^{\tens n_1}\tens j\tens1^{\tens n_2}\tens
j\tdt1^{\tens n_{k-1}}\tens j\tens1^{\tens n_k})\sff_{n+k-1},
\label{eq-fn1n2nk}
\end{equation}
with \(n+k\ge2\) except for $\sff_{0;0}$ which is a summand of $v$. 
Note that \(v\partial=i\rho_\emptyset-i\sff_1\).

Let us prove that the graded bimodule $(A_\infty^{hu},F_1^{hu})$ is free over $(\kk,0)$. 
The graded bimodule \((A_\infty,F_1)\langle j\rangle\) can be presented as
\begin{multline}
(A_\infty, A_\infty\odot\kk\{\sff_n \mid n\ge1\}\odot A_\infty)\langle j\rangle
\\
\simeq
\bigl( A_\infty\langle j\rangle, 
\kk\langle m_{n_1;\dots;n_k} \mid {k+\ttt\sum_{q=1}^k} n_q\ge3\rangle \odot \kk\{\sff_{n_1;\dots;n_k} \mid {k+\ttt\sum_{q=1}^k} n_q\ge2\} 
\odot A_\infty\langle j\rangle \bigr)
\label{eq-(AAokfoA)j}
\end{multline}
taking into account \eqref{eq-fn1n2nk}.
The free graded operad bimodule generated by \(m_{n_1;\dots;n_k}\) and \(\sff_{n_1;\dots;n_k}\) has the form
\begin{multline*}
K= T\bigl(\kk\{m_{n_1;\dots;n_k} \mid {k+\ttt\sum_{q=1}^k} n_q\ge3\},
\kk\{\sff_{n_1;\dots;n_k} \mid {k+\ttt\sum_{q=1}^k} n_q\ge2\}\bigr)
\\
\hskip\multlinegap
=\bigl(\kk\langle m_{n_1;\dots;n_k} \mid {k+\ttt\sum_{q=1}^k} n_q\ge3\rangle,
\hfill
\\
\kk\langle m_{n_1;\dots;n_k} \mid {k+\ttt\sum_{q=1}^k} n_q\ge3\rangle\odot
\kk\{\sff_{n_1;\dots;n_k} \mid {k+\ttt\sum_{q=1}^k} n_q\ge2\}\odot
\kk\langle m_{n_1;\dots;n_k} \mid {k+\ttt\sum_{q=1}^k} n_q\ge3\rangle\bigr).
\end{multline*}
It is a direct summand of \eqref{eq-(AAokfoA)j}, so we have a split exact sequence in \(\gr^{\NN\sqcup\NN}\)
\[ 0 \rTTo K \pile{\rTTo^\alpha \\ \lTTo_\pi} (A_\infty,F_1)\langle j\rangle
\pile{\rTTo^\varkappa \\ \lTTo_\omega} (\kk j,\kk j\rho_\emptyset) \rTTo 0,
\]
where $\omega$ takes $j\rho_\emptyset$ to the nullary generator
 \(j\rho_\emptyset=1\tens1\tens j
 \in\kk\tens\kk\tens A_\infty\langle j\rangle(0)\).
Consider also the graded bimodule
\[ L= 
T\bigl(\kk\{m_{n_1;\dots;n_k} \mid {k+\ttt\sum_{q=1}^k} n_q\ge3\},
\kk\{v,\sff_{n_1;\dots;n_k} \mid {k+\ttt\sum_{q=1}^k} n_q\ge3 
\text{ \ or \ } k=1=n_1 \}\bigr).
\]
Notice that the map $L\to K$, \(v\mapsto\sff_{0;0}\), identifies the bimodules $L$ and $K$.
The bimodule part of $K$ (or $L$) has a basis \((b(t,\Inp(t),f(t)))\) indexed by trees $t$ with a distinguished floor \(f(t)\subset\IV(t)\) such that conditions (1), (2) of \remref{rem-A8-123-abc} hold.
Elements \(b(t,\Inp(t),f(t))\) are obtained similarly to (a)--(c) of \remref{rem-A8-123-abc} using $m_{n_1;\dots;n_k}$ and $\sff_{n_1;\dots;n_k}$ in place of $m_n$, $\sff_n$.
For simplicity we represent $v$ in $L$ by 
 \ $
\hstretch140
\vstretch50
\begin{tanglec}
\noder{j}
\\
\s
\\
\sd
\end{tanglec}
 $ \
just as $\sff_{0;0}$ in $K$, not as 
 \ $
\hstretch140
\vstretch70
\begin{tanglec}
\noder{v}
\\
\sd
\end{tanglec}
 $ \ ,
which would require modification of conditions (1), (2). 
Besides both presentations are interchangeable.

Consider the graded bimodule morphism
\[ \beta: L \to (A_\infty,F_1)\langle j\rangle, \qquad 
v \mapsto \sff_{0;0} -j\rho_\emptyset,
\]
which maps other generators identically.
The morphism $\beta$ extends to basic elements so that each factor $j\rho_\emptyset$ arising from a vertex of type $v$ gives its $j$ to subsequent $m_{;;;}$ adding another semicolon to its indexing sequence.
This follows by associativity of $\rho$.
The basic element $v$ is mapped by \(\beta\varkappa\) to $-j\rho_\emptyset$.
For any other basic element \(b(t)\in L\) we have \(b(t).\beta\varkappa=0\).

The map \(\beta-\beta\varkappa\omega\in\gr^{\NN\sqcup\NN}\) factors through $\alpha$ as the following diagram shows:
\begin{diagram}
0 &\rTTo &K &\pile{\rTTo^\alpha \\ \lTTo_\pi} &(A_\infty,F_1)\langle j\rangle
&\pile{\rTTo^\varkappa \\ \lTTo_\omega} &(\kk j,\kk j\rho_\emptyset) &\rTTo &0
\\
&&\uTTo<{\exists!\gamma} &\ruTTo>{\beta-\beta\varkappa\omega} &
\\
&&L
\end{diagram}

The unique map
 \(\gamma=(\beta-\beta\varkappa\omega)\pi=\beta\pi:
 L\to K\in\gr^{\NN\sqcup\NN}\), 
such that \(\beta-\beta\varkappa\omega=\gamma\alpha\), has a triangular matrix.
In fact, \(v.\gamma=\sff_{0;0}\), and $L$ and $K$ have an $\NN$\n-grading, \(L^q=\oplus\kk b(t)\), \(K^q=\oplus\kk b(t)\), where the summation is over trees $t$ with $q$ vertices of type $v$ (resp. \(\sff_{0;0}\)).
The map $\gamma$ takes the filtration \(L_q=L^0\oplus\dots\oplus L^q\) to the filtration \(K_q=K^0\oplus\dots\oplus K^q\). 
The diagonal entries \(\gamma^{qq}:L^q\to K^q\) are identity maps.
Thus, the matrix of \(\gamma\) equals \(1-N\), where $N$ is locally nilpotent, and $\gamma$ is invertible.
We obtained a split exact sequence
\[ 0 \rTTo L \rTTo^{\beta-\beta\varkappa\omega} (A_\infty,F_1)\langle j\rangle
\pile{\rTTo^\varkappa \\ \lTTo_\omega} (\kk j,\kk j\rho_\emptyset) \rTTo 0.
\]
The map $\beta$ (and \(\beta-\beta\varkappa\omega\)) takes the direct complement \(L\ominus\kk v\) spanned by all basic elements except $v$ to the direct complement
\((A_\infty,F_1)\langle j\rangle\ominus\kk\{\sff_{0;0},j\rho_\emptyset\}\).
Therefore, this restriction of $\beta$ is an isomorphism and we can extend it to a split exact sequence
\[ 0 \rTTo L \rTTo^\beta (A_\infty,F_1)\langle j\rangle
\pile{\rTTo^\theta \\ \lTTo_\omega} (\kk j,\kk j\rho_\emptyset) \rTTo 0,
\]
where \(\sff_{0;0}.\theta=j\rho_\emptyset.\theta=j\rho_\emptyset\) and on the complement $\theta$ vanishes.

Adding freely $i$ we deduce the split exact sequence in \(\gr^{\NN\sqcup\NN}\)
\begin{equation}
0 \to L\langle i\rangle \to (A_\infty,F_1)\langle j,i\rangle
\to (\kk j,\kk j\rho_\emptyset) \to 0.
\label{eq-Li-(AF)ji-(kjkjr)}
\end{equation}
The image of the embedding is precisely $(A_\infty^{hu},F_1^{hu})$.

On the other hand, from the top row of diagram~\eqref{dia-(AF)-(AsuFsu)-(k1k1)} we deduce a splittable exact sequence in \(\gr^{\NN\sqcup\NN}\)
\[ 0 \to (A_\infty,F_1)\langle i\rangle \to 
(A_\infty^\su,F_1^\su)\langle i\rangle \to
(\kk1^\su,\kk1^\su\rho_\emptyset) \to 0.
\]
We may choose the splitting of this exact sequence as indicated below:
\[ 0 \to (A_\infty,F_1)\langle i\rangle \to 
(A_\infty^\su,F_1^\su)\langle i\rangle \to
(\kk\{1^\su-i\},\kk\{1^\su\rho_\emptyset-i\rho_\emptyset\}) \to 0.
\]
Adding freely $j$ we get the split exact sequence
\begin{equation}
0 \to (A_\infty,F_1)\langle i,j\rangle \to 
(A_\infty^\su,F_1^\su)\langle i,j\rangle \to
(\kk\{1^\su-i\},\kk\{(1^\su-i)\rho_\emptyset\}) \to 0.
\label{eq-(AF)ij-(AsuFsu)ij-(k1ik1ir)}
\end{equation}
Combining \eqref{eq-Li-(AF)ji-(kjkjr)} with \eqref{eq-(AF)ij-(AsuFsu)ij-(k1ik1ir)} we get a split exact sequence 
\begin{equation}
0 \to (A_\infty^{hu},F_1^{hu}) \xrightarrow{i'}
(A_\infty^\su,F_1^\su)\langle i,j\rangle \to
(\kk\{1^\su-i,j\},\kk\{(1^\su-i)\rho_\emptyset,j\rho_\emptyset\}) \to 0.
\label{eq-0-(Ahu8F1hu)-ij-0}
\end{equation}

The differential in $(A_\infty^{hu},F_1^{hu})$ is computed through that of \((A_\infty^\su,F_1^\su)\langle i,j\rangle\). 
Actually, \eqref{eq-0-(Ahu8F1hu)-ij-0} is a split exact sequence in \(\dg^{\NN\sqcup\NN}\), where the third term obtains the differential 
\(j.\partial=1^\su-i\), 
\(j\rho_\emptyset.\partial=1^\su\rho_\emptyset-i\rho_\emptyset\). 
The third term is contractible, which shows that the inclusion $i'$ is a homotopy isomorphism in \(\dg^{\NN\sqcup\NN}\). 
Hence, the epimorphism \(p=i'\cdot p'':(A_\infty^{hu},F_1^{hu})\to(\Ass,\Ass)\) is a homotopy isomorphism as well.

In order to prove that $(\1,0)\to(A_\infty^{hu},F_1^{hu})$ is a standard cofibration we present it as a colimit of sequence of elementary cofibrations \((\1,0)\to\cd_0=T(\kk\{i,m_2\},\kk \sff_1)\to\cd_1\to\cd_2\to\dots\), for $r>0$
\[ \cd_r =T\bigl(
\kk\{i,m_{n_1;\dots;n_k} \mid \deg m_{n_1;\dots;n_k} \ge-r\},
\kk\{v,\sff_{n_1;\dots;n_k} \mid \deg \sff_{n_1;\dots;n_k} \ge-r\} \bigr).
\]

Algebra maps over $(A_\infty^{hu},F_1^{hu})$ are identified with homotopy unital \ainf-morphisms, which we define in the spirit of Fukaya's approach:

\begin{definition}
A \emph{homotopy unital structure} of an \ainf-morphism $f:A\to B$ is an
\ainf-morphism 
$f^+:A^+=A\oplus\kk1^\su_A\oplus\kk j^A\to B\oplus\kk1^\su_B\oplus\kk j^B=B^+$ such that:
\begin{enumerate}
\renewcommand{\labelenumi}{(\arabic{enumi})}
\item $f^+$ is a strictly unital:
\[  1^\su_A\sff_1=1^\su_B, \quad
(1^{\tens a}\tens1^\su_A\tens1^{\tens b})\sff_{a+1+b} =0 \text{ \ if \ } a+b>0;
\]

\item the element \(v^B=j^A\sff^+_1-j^B\) is contained in $B$;

\item the restriction of $f^+$ to $A$ gives $f$;

\item \((A\oplus\kk j)^{\tens n}\sff^+_n\subset B\), for each $n>1$.
\end{enumerate}
\end{definition}

Note that Fukaya, Oh, Ohta and Ono define homotopy-unital $A_\infty$ homomorphisms using only above conditions (1) and (3) in their book \cite[Definition~3.3.13]{FukayaOhOhtaOno:Anomaly}.

Homotopy unital structure of an \ainf-morphism $f$ means a \emph{choice} of such $f^+$. 
There is another notion of unitality which is a \emph{property} of an 
\ainf-morphism:

\begin{definition}[{\cite[Definition~8.1]{Lyu-AinfCat}}]
An \ainf-morphism $f:A\to B$ between unital \ainf-algebras is \emph{unital} if the cycles \(i^A\sff_1\) and \(i^B\) differ by a boundary.
\end{definition}

For a homotopy unital \ainf-morphism $f:A\to B$ the equation holds \(v^Bm_1=v\partial=i\rho_\emptyset-i\sff_1=i^B-i^A\sff_1\). 
Thus a homotopy unital \ainf-morphism is unital.

\begin{conjecture}
Unitality of an \ainf-morphism is equivalent to homotopy unitality: any unital \ainf-morphism admits a homotopy unital structure.
\end{conjecture}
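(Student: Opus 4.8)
Since the implication from homotopy unitality to unitality is already established in the paragraph preceding the conjecture, only the converse requires proof. So fix unital \ainf-algebras $A$, $B$ and a unital \ainf-morphism $f\colon A\to B$, given by components $\sff_n\colon A^{\tens n}\to B$ of degree $1-n$. By the Lyubashenko--Manzyuk theorem each of $A$ and $B$ admits a homotopy unital structure; fix such structures once and for all, that is, fix operad morphisms $A_\infty^{hu}\to\END A$ and $A_\infty^{hu}\to\END B$ extending the given \ainf-structures. Equipping $f$ with a homotopy unital structure then means extending the bimodule morphism $F_1\to\HOM(A,B)$ to a morphism $F_1^{hu}\to\HOM(A,B)$ compatible with these two operad actions. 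As $F_1^{hu}$ is the free $A_\infty^{hu}$-bimodule on the generators $\sff_{n_1;\dots;n_k}$ and $v$, this amounts to a coherent choice of images $\hat\sff_{n_1;\dots;n_k}\in\HOM(A,B)$ and $\hat v$; the images for $k=1$ are the given $\sff_n$, while $\hat v$ of degree $-1$ is supplied by the unitality of $f$, since $\hat v m_1=i^B-i^A\sff_1$.

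The plan is to make this choice by induction along the standard cofibration $(\1,0)\to\cd_0\to\cd_1\to\cdots$ displayed above, where $\cd_r$ adjoins all generators of degree $\ge-r$. Assume the images of all generators in $\cd_{r-1}$ have been fixed so that the defining differential relations of $F_1^{hu}$ hold. For a generator $\sff_{n_1;\dots;n_k}$ of degree exactly $-r$, the relevant relation has the shape $\sff_{n_1;\dots;n_k}\partial=X_{n_1;\dots;n_k}$, where $X_{n_1;\dots;n_k}$ is a universal polynomial in the operations $m$ and in strictly lower generators $\sff$, all of which already have images. Writing $R$ for the image of $X_{n_1;\dots;n_k}$ in $\HOM(A,B)$, the task reduces to solving $[\hat\sff_{n_1;\dots;n_k},m_1]=R$ for the differential $[-,m_1]$ induced on $\HOM(A,B)$ by the internal differentials $d_A$ and $d_B$. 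Because $\partial^2=0$ in $F_1^{hu}$, the element $R$ is automatically a cycle for $[-,m_1]$, so the induction goes through precisely when each such $R$ is a boundary.

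This boundary property is the crux, and it is the step I expect to be the main obstacle. My plan is to establish it by exhibiting an explicit contracting homotopy on the complex of operations, built from the unit homotopies $m_{10},m_{01}$ of $A$ and $B$ together with the witness $\hat v$, in direct analogy with the algebra case of Lyubashenko and Manzyuk. Equivalently, one assembles the obstructions into an inductively constructed mapping cone and proves its acyclicity: applying $m_{10}$, $m_{01}$ or $\hat v$ strictly lowers a length filtration, so the relevant endomorphism takes the form $1-N$ with $N$ locally nilpotent and hence invertible, exactly as the operator $N$ is inverted in the proof that $(A_\infty,F_1)$ is a cofibrant replacement of $(\As,\As)$. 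The delicate point is the bookkeeping of signs and filtration in the two-sided situation, where homotopies act both on the arguments in $A$ and on the value in $B$; producing a single operator that simultaneously contracts all the obstruction cycles is where the real work lies.

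A second route, which I would develop in parallel as a check, bypasses the direct construction by reducing to the already available algebra statement. Encode $f$ as a two-object \ainf-category $\cc_f$ with $\cc_f(X,X)=A$, $\cc_f(Y,Y)=B$, $\cc_f(X,Y)=B$ with the left action through $f$, and $\cc_f(Y,X)=0$, the triangular \ainf-structure recording $m^A$, $m^B$ and the $\sff_n$. One checks that, given that $A$ and $B$ are unital, $\cc_f$ is unital exactly when $f$ is unital: unitality of the off-diagonal hom $\cc_f(X,Y)$ is precisely the condition that $i^A\sff_1$ and $i^B$ differ by a boundary. The Lyubashenko--Manzyuk theorem, which applies verbatim to \ainf-categories, then endows $\cc_f$ with a homotopy unital structure. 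The remaining and main obstacle for this approach is to verify that the homotopy unital structure can be taken triangular, respecting the block grading by source and target and the vanishing of $\cc_f(Y,X)$; its off-diagonal block then delivers exactly the operations $\sff^+_{n_1;\dots;n_k}$ and the element $v^B$ of the definition of a homotopy unital \ainf-morphism. Triangularity is plausible because the entire obstruction calculus splits along the quiver grading, but it must be checked rather than assumed.
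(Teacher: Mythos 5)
This statement is left as an open \emph{conjecture} in the paper: there is no proof there to compare yours against, so the only question is whether your argument closes it, and it does not --- both of your routes defer precisely the step that constitutes the conjecture. In the obstruction-theoretic route, the assertion that each obstruction cycle $R$ is a boundary \emph{is} the statement to be proved. The complex $\bigl(\udg(A^{\tens n},B),[-,m_1]\bigr)$ is not acyclic in general, so cycles are not boundaries for formal reasons; the vanishing of the obstruction class must be extracted from unitality of $f$, and you give no construction. Your appeal to the operator $1-N$ misreads where that device works in the paper: there $N$ is a locally nilpotent endomorphism of the \emph{free} $\As$-bimodule $\overline{F}_1$ (and, in graded form, of $L$), used to show that $p\beta$ is homotopy invertible, respectively that $\gamma$ is invertible --- statements about free objects with no obstruction content. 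Local nilpotency comes from the length filtration of a free bimodule and has no analogue acting on $\HOM(A,B)$, whose cohomology is nonzero in general. The algebra case (the Lyubashenko--Manzyuk theorem quoted in the paper) required a substantial separate proof; nothing in your sketch substitutes for its morphism analogue.

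The categorical route has a different and more structural gap, which you understate. The two-object category $\cc_f$ does not ``record the $\sff_n$'': an operation of an $A_\infty$-category with output in $\cc_f(X,Y)$ takes a composable chain, hence has exactly one input from $\cc_f(X,Y)$ (since $\cc_f(Y,X)=0$), $a$ inputs from $A$ and $b$ inputs from $B$. These are the \emph{bimodule} operations $A^{\tens a}\tens M\tens B^{\tens b}\to M$ induced by $f$, not the functor components $\sff_n:A^{\tens n}\to B$, which are not operations of any $A_\infty$-category. Consequently, even granting that the Lyubashenko--Manzyuk theorem applies verbatim to categories and that the resulting homotopy unital structure can be chosen triangular (itself unverified), its off-diagonal block yields homotopy-unital bimodule operations with $j$'s inserted, and recovering from these the components $\sff^+_{n_1;\dots;n_k}$ and the element $v^B$ demanded by the paper's definition of a homotopy unital $A_\infty$-morphism is a nontrivial Yoneda-type reconstruction that you neither perform nor reduce to a known result. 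As a research program your plan is sensible --- it identifies the right obstacles --- but in both branches the identified obstacle is exactly the conjecture, so no proof has been given.
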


\subsection{Composition of unital \texorpdfstring{$A_\infty$}{A8}-morphisms.}
The category of collections of complexes is cocomplete and the tensor product $\odot$ preserves colimits, since $\odot$ is obtained from direct sums of the right exact functor $\tens_\kk$. 
Therefore, operad bimodules inherit from $\odot$ a bicategory structure. 
The composition functor
\[ \co_0\imodi\co_1\times\co_1\imodi\co_2\times\dots\times \co_{n-1}\imodi\co_n
\to \co_0\imodi\co_n
\]
takes a sequence of bimodules \((\cp_1,\dots,\cp_n)\), $n\ge1$, to
\[ \cp_1\odot_{\co_1}\cp_2\odot_{\co_2}\dots\odot_{\co_{n-1}}\cp_n
=\colim\bigl( \cp_1\odot\co_1\odot\cp_2\odot\dots\odot\co_{n-1}\odot\cp_n
\rTTo^{2^{n-1}\textup{ arrows}} \cp_1\odot\cp_2\odot\dots\odot\cp_n \bigr).
\]
For $n>1$ each of $2^{n-1}$ arrows is determined by choosing whether $\co_i$ acts on the right on $\cp_i$ or on the left on $\cp_{i+1}$ for all $i\le n-1$. 
For $n=1$ the above functor is the identity functor. 
For $n=0$ the unit 1\n-morphisms \(\1\to\co\imodi\co\) send the only object of the terminal category to the regular $\co$\n-bimodule.

In order to have an associative composition of \((\co,\cf)\)-morphisms, we postulate that $\cf$ is equipped with a coassociative counital coalgebra structure \((\cf,\Delta:\cf\to\cf\odot_\co\cf,\eps:\cf\to\co)\) in the monoidal category of $\co$\n-bimodules. 
Compatibility with the right action \(\rho_{n_1,\dots,n_k}\) takes for $k=0$ the form of the equation
\begin{diagram}[LaTeXeqno]
\co(0) &\rTTo^{\rho_\emptyset} &\cf(0)
\\
\dTTo<1 &= &\dTTo>\Delta
\\
\co(0) &\rTTo^{\rho_\emptyset} &(\cf\odot_\co\cf)(0)
\label{dia-rho-emptyset}
\end{diagram}

Suppose that $A$, $B$, $C$ are $\co$\n-algebras and $g:\cf\to\HOM(A,B)$, 
$h:\cf\to\HOM(B,C)$ are \((\co,\cf)\)-morphisms. 
Then their composition is defined as the convolution
\[ g\cdot h =\bigl[ \cf \rTTo^\Delta \cf\odot_\co\cf \rTTo^{g\odot h}
\HOM(A,B)\odot_{\END B}\HOM(B,C) \to \HOM(A,C) \bigr].
\]
The convolution composition is associative. 
The unit \((\co,\cf)\)-morphism of an $\co$\n-algebra $B$ is
\[ 1_B =\bigl( \cf \rTTo^\eps \co \to \END B \bigr).
\]
In this way $\co$\n-algebras and their \((\co,\cf)\)-morphisms form a category.

For \((\As,\As)\) and \((\Ass,\Ass)\) the comultiplication and the counit are the identity maps. 
For \ainf-morphisms the comultiplication 
\(\Delta:F_1\to F_1\odot_{A_\infty}F_1\) is
\[ \sff_n\Delta
=\sum_{i_1+\dots+i_k=n}(\sff_{i_1}\tens \sff_{i_2}\tdt \sff_{i_k})\tens \sff_k,
\]
which results in the composition
\begin{equation}
(g\cdot h)_n=\sum_{i_1+\dots+i_k=n}(g_{i_1}\tens g_{i_2}\tdt g_{i_k})\tens h_k.
\label{eq-ghn-(gggg)h}
\end{equation}
The counit \(\eps:F_1\to A_\infty\) sends \(\sff_1\in F_1(1)\) to the operad unit \(1\in A_\infty(1)\) and other generators $\sff_n$, $n>1$ to 0. 
Hence the identity \ainf-morphism $\id^B:B\to B$ has $\id^B_1=1$ and $\id^B_n=0$ for $n>1$.

Exactly the same formulae for differential, comultiplication and counit define a coalgebra structure for the free \(A_\infty^\su\)-bimodule generated by $\sff_n$, $n\ge1$. 
One can verify that its quotient bimodule \(F_1^\su\) inherits this coalgebra structure, namely, that the subbimodule of relations generated by \eqref{eq-1surho-1suf1} is a coideal. 
Informally it amounts to check that composition of strictly unital 
\ainf-morphisms is again strictly unital.

When the elements $i$, $j$ are freely added to the operadic part of the bimodule \((A_\infty^\su,F_1^\su)\) the coalgebra structure is preserved. 
It takes the form
 \((F_1^\su\langle i,j\rangle,\Delta:F_1^\su\langle i,j\rangle\to
 F_1^\su\langle i,j\rangle\odot_{A_\infty^\su\langle i,j\rangle}
 F_1^\su\langle i,j\rangle,
 \eps:F_1^\su\langle i,j\rangle\to A_\infty^\su\langle i,j\rangle)\).
The subbimodule
\((A_\infty^{hu},F_1^{hu})\subset(A_\infty^\su,F_1^\su)\langle i,j\rangle\) is actually a subcoalgebra, namely, the unique top arrows in commutative squares
\[
\begin{diagram}[inline]
F_1^{hu} &\rTTo^\Delta &F_1^{hu}\odot_{A_\infty^{hu}}F_1^{hu}
\\
\dMono &&\dMono>\alpha
\\
F_1^\su\langle i,j\rangle &\rTTo^\Delta
&F_1^\su\langle i,j\rangle\odot_{A_\infty^\su\langle i,j\rangle}
F_1^\su\langle i,j\rangle
\end{diagram}
\hspace{4em},\hspace{2em}
\begin{diagram}[inline]
F_1^{hu} &\rTTo^\eps &A_\infty^{hu}
\\
\dMono &&\dMono
\\
F_1^\su\langle i,j\rangle &\rTTo^\eps
&A_\infty^\su\langle i,j\rangle
\end{diagram}
\]
turn $F_1^{hu}$ into a coalgebra. Proof of this fact uses 
equation~\eqref{dia-rho-emptyset}:
\[ v\Delta =(j\sff_1-j\rho_\emptyset)\Delta 
=j\sff_1\tens \sff_1 -j\rho_\emptyset
=(j\sff_1-j\rho_\emptyset)\tens\sff_1 +j\sff_1-j\rho_\emptyset =v\tens\sff_1 +v
\]
as well as definition of the tensor product of bimodules. 
On other generators we transform
\[ \sff_{n_1;n_2;\dots;n_k}\Delta =(1^{\tens n_1}\tens j\tens1^{\tens n_2}\tens
j\tdt1^{\tens n_{k-1}}\tens j\tens1^{\tens n_k})
\sum_{i_1+\dots+i_l=n+k-1} (\sff_{i_1}\tens\sff_{i_2}\tdt\sff_{i_l})\tens\sff_l
\]
as follows. Each $\sff_{i_q}$ is replaced with a generator 
\(\sff_{a_1;a_2;\dots;a_p}\) accordingly with the set of $j$'s appearing among the arguments of $\sff_{i_q}$. 
The only exception is the case of $j\sff_1$ which is replaced with $v+j\rho_\emptyset$. 
In obtained summands all instances of $j\rho_\emptyset$ are moved to the right as arguments $j$ of $\sff_l$ due to defining the tensor product as a colimit. 
Finally, $\sff_l$ is replaced with a generator \(\sff_{c_1;c_2;\dots;c_t}\).

In order to show that the arrow $\alpha$ is a monomorphism we present it as the composition
\[ \alpha =\bigl( F_1^{hu}\odot_{A_\infty^{hu}}F_1^{hu} \rTTo^{1\odot i'}
F_1^{hu}\odot_{A_\infty^{hu}}(F_1^\su\langle i,j\rangle) \rTTo^\beta
(F_1^\su\langle i,j\rangle)\odot_{A_\infty^\su\langle i,j\rangle}
(F_1^\su\langle i,j\rangle) \bigr).
\]
Such $i'$ is a split monomorphism by \eqref{eq-0-(Ahu8F1hu)-ij-0}, and so is $1\odot i'$. 
By the way, 
\(\Coker1\odot i'=\Coker i'=\kk\{(1^\su-i)\rho_\emptyset,j\rho_\emptyset\}\). 
The morphism $\beta$ is a particular case of a morphism
\[ \beta =\bigl( F_1^{hu}\odot_{A_\infty^{hu}}\cp \rMono^{i'\odot1}
(F_1^\su\langle i,j\rangle)\odot_{A_\infty^{hu}}\cp \rEpi
(F_1^\su\langle i,j\rangle)\odot_{A_\infty^\su\langle i,j\rangle}\cp \bigr),
\]
which is invertible for an arbitrary left 
\(A_\infty^\su\langle i,j\rangle\)-module $\cp$. 
In fact, split exact sequence \eqref{eq-0-Ahu-Asuij-k1ij-0} shows that \(\odot_{A_\infty^\su\langle i,j\rangle}\) is obtained from \(\odot_{A_\infty^{hu}}\) by extra identifications: \((1^\su-i)\rho_\emptyset\) and \(j\rho_\emptyset\) can be moved from the first factor to $\cp$, where they operate through $\lambda$. 
It remains to notice that 
\[ F_1^\su\langle i,j\rangle
=F_1^{hu}\oplus\kk\{(1^\su-i)\rho_\emptyset,j\rho_\emptyset\}
\]
due to split exact sequence \eqref{eq-0-(Ahu8F1hu)-ij-0}.

This comultiplication leads to composition~\eqref{eq-ghn-(gggg)h} of
\ainf-morphisms $g^+:A^+\to B^+$ and $h^+:B^+\to C^+$. 
For instance, \(v\Delta=v+v\tens \sff_1\) corresponds to the identity
\[ j^Ag^+_1h^+_1 =(j^B +v^B)h^+_1 =j^C +v^C +v^Bh_1.
\]

\subsection{Concluding remarks.}
There are at least three different approaches to \ainf-categories and their particular species -- \ainf-algebras. 
In one approach \ainf-categories and algebras are represented as differential graded tensor coalgebras. 
Another approach exploits the fact that \ainf-categories form a closed multicategory \cite{BesLyuMan-book}. 
The third approach presented in the current article is based on resolutions (cofibrant replacements) of $\dg$-operads and bimodules over them. 
It would be very instructive to combine all three approaches into a single one. 
This could lead to a better understanding of the subject staying behind the \ainf-notions.

    \ifx\chooseClass2

	\else

	\fi

    \ifx\chooseClass2
\bibliographystyle{elsarticle-num}
	\else
\bibliographystyle{amsalpha}
\tableofcontents
	\fi


\end{document}